\theoremstyle{definition}\newtheorem{definition}{Definition}[section]
\theoremstyle{definition}\newtheorem{theorem}{Theorem}[section]
\theoremstyle{plain}\newtheorem{lemma}[theorem]{Lemma}
\theoremstyle{plain}
\theoremstyle{plain}\newtheorem{prop}[theorem]{Proposition}
\theoremstyle{definition}\newtheorem{remark}{Remark}[section]
\newcommand{\e}{\mathrm{e}}
\newcommand{\dd}{\mathrm{~d}}
\newcommand{\mr}{\mathbb{R}}
\newcommand{\Bp}{B^{\frac{2}{p}+1}_{p,1}}
\newcommand{\lef}{\left\|}
\newcommand{\rig}{\right\|}
\newcommand{\define}{\stackrel{\mathrm{def}}{=}}
\numberwithin{equation}{section}
\begin{document}
	%%%%%%%%%%%%%%%%%%%%%%%%%%%%%%%%%%%%%%%%%%%%%%%%%%%%%%%%%%%%%%%%%%%%%%%%%%%%%%%%%%%%%%%%%%%%%%%%%%%%
	\title{Mild ill-posedness in $W^{1,\infty}$ for the  incompressible porous media equation}

\author{Yaowei Xie, Huan Yu}

\author{Yaowei Xie\footnote{School of Mathematical Sciences, Capital Normal University, Beijing 100048, PR China. Email: mathxyw@163.com},~~~\,\,\,\,Huan Yu\footnote{School of Applied Science, Beijing Information Science and Technology University, Beijing, 100192, P.R.China.  Email: huanyu@bistu.edu.cn}}

\date{}
\maketitle

\begin{abstract}
	In this paper, we establish  the mild ill-posedness of 2D IPM equation in the critical Sobolev space $W^{1,\infty}$ when the initial data are small perturbations of  stable profile $g(x_2).$ Consequently, instability can be inferred.  Notably, our results are valid for arbitrary vertically stratified density profiles $g(x_2)$ without imposing any restrictions on the sign of  $g'(x_2).$ From a physical perspective, since gravity acts downward, density profiles satisfying $g'(x_2) < 0$ typically correspond to stable configurations, whereas those with 	$g '(x_2) > 0$ are generally expected to be unstable.Surprisingly, our analysis uncovers an unexpected instability even when $g'(x_2) < 0$ and $g'(x_2)\in W^{2,\infty}(\mr)$.
 To the best of our knowledge, this work provides the first rigorous demonstration of IPM instability for vertically nonlinear density profiles, marking a significant departure from conventional physical expectations.

\end{abstract}
\noindent {\bf MSC(2020):} 76S05, 35R25, 76B70\\\noindent
{\bf Keywords:}  The incompressible porous media equation,  Ill-posedness, Instability
%%%%%%%%%%
\section{Introduction}

In this paper, we consider the 2D incompressible porous medium (IPM) system, which consists of an active scalar equation with a velocity field $u({\bf x},t)$  satisfying the momentum equation given by Darcy's law. That is,
\begin{align}\label{1.1}
	\begin{cases}
		\partial_t \rho+\left(u\cdot \nabla\right)\rho=0,\\
		u=-\nabla p-(0,\rho),\\
		\nabla\cdot u=0,\\
		\rho({\bf x},0)=\rho_0,
	\end{cases}
\end{align}
where $({\bf x}, t) \in\mr^2\times \mr^+$ with ${\bf x}=(x_1,x_2)$, $\rho({\bf x},t)$  represents the  density transported  by the fluid, $u({\bf x},t)$ is  the incompressible velocity, and $p({\bf x},t)$ is the  pressure. We refer to
\cite{ipm-phy,castro-2019-local-arma} and references therein for further explanations on the physical background and applications of this model.

By utilizing the incompressibility condition, the Biot-Savart law for the velocity field of 2D IPM equation can be  expressed as either  $$u =\nabla^\perp(-\Delta)^{-1}\partial_{x_1}\rho~~ \text{or} ~~u=\nabla^\perp (-\Delta)^{-\frac12}\mathcal{R}_1\rho,$$ where $\mathcal{R}_1$ represents the first component of the Riesz transform. As a result, the regularity of the velocity in system \eqref{1.1} is the same as that of the 2D  SQG equation but is one order lower than that of the 2D Euler equation.  Additionally, a notable distinction is that the Biot-Savart law of the 2D IPM equation contains  a horizontal partial derivatives $\partial_{x_1}$. This feature leads to the existence of relatively simple steady-state solutions of the form $\rho_s(x)=g(x_2)$. 
 
Let us denote $\eta({\bf x},t)\define \rho({\bf x},t)-g(x_2),$ where $\rho({\bf x},t)$ is the solution of system \eqref{1.1}, then $\eta$ satisfies the following perturbation equation:  
\begin{align}\label{p-IPM}
	\begin{cases}
		\partial_t \eta+\left(u\cdot \nabla\right)\eta=-g'(x_2)\mathcal{R}_1^2 \eta,\\[1mm]
		u=\nabla^\perp (-\Delta)^{-\frac12}\mathcal{R}_1\eta,\\[1mm]
		\eta({\bf x},0)=\eta_0({\bf x})=\rho_0({\bf x})-g(x_2).
	\end{cases}
\end{align}
Here, $\mathcal{R}_1^2$  on the right-hand side of the perturbation equation is a negative operator. If $g'(x_2)<0$ (which physically corresponds to a fluid density that varies only in the vertical direction, with the density of each upper layer being less than that of the lower layer, indicating a stably stratified fluid), then the stability of the perturbation equation can be expected.

%Taking the divergence of \eqref{1.1}$_2$ simultaneously allows us to express the pressure $p({\bf x},t)$ in terms of the density $\rho({\bf x},t)$, $$p=(-\Delta)^{-1}\partial_{x_2}\rho.$$ Substituting this back into \eqref{1.1}$_2$ yields that
%the velocity field $u({\bf x},t)$ can be reformulated in terms of a singularintegral operator of degree 0,
%\begin{align*}
%	u=\nabla^\perp (-\Delta)^{-\frac12}\mathcal{R}_1\rho,
%\end{align*}
%where $\nabla^\perp=(-\partial_{x_2},\partial_{x_1}),$
%	$\mathcal{R}_1=\partial_{x_1} (-\Delta)^{-\frac{1}{2}}.$	
%This is exactly
%the same as the SQG system except that $u =\nabla^\perp(-\Delta)^{-\frac{1}{2}}\rho$ in that case
%(see \cite{sqg-2-illpose-cordoba-2022-advance}). Equation $\eqref{1.1}_1$
%has also the same form as the 2D Euler
%equations in the vorticity formulation, but the latter has the more regular (by one derivative) Biot-Savart law
% $u =\nabla^\perp(-\Delta)^{-1}\rho$ (see \cite{euler-2-illpose-bourgain-2015-IA}).
%Consequently, system \eqref{1.1} can be reformulated as follows
%\begin{align}\label{IPM}
%	\begin{cases}
%		\partial_t \rho+\left(u\cdot \nabla\right)\rho=0,\\[1mm]
%		u=\nabla^\perp (-\Delta)^{-\frac12}\mathcal{R}_1\rho,\\[1mm]
%		\rho({\bf x},0)=\rho_0.
%	\end{cases}
%\end{align}
{\bf Well-posedness}
The 2D IPM equation is locally well-posed in $H^s(X)(s>2)$,  where $X$ can be the whole space $\mr^2$, a periodic domain $\mathbb{T}^2$, or a bounded strip domain $\mathbb{S}=\mathbb{T}\times [-l,l],$  $0<l<\infty$, as seen in references  \cite{castro-2019-local-arma}, \cite{xue-2009-besov-} and \cite{yao-2023-arma-smallscale-ipm}. However, the global well-posedness remains a challenging and unresolved problem, similarly to the SQG equation case.

In recent years, significant progress has been made regarding the global well-posedness of system \eqref{1.1} for
initial data close to  linear stratified density $g(x_2) = -x_2$ in certain Sobolev spaces, which also implies the asymptotic stability of system \eqref{p-IPM}, see \cite{castro-2019-local-arma, elgindi-ipm-2017-arma,paicu-2024-arma-IPM,park-arxiv-2024-IPM-strip,kim-IPM-2024-JFA}.
Elgindi \cite{elgindi-ipm-2017-arma} established the  asymptotic stability of system \eqref{p-IPM}  for initial perturbations in ${H^k}(\mr^2)\cap W^{4,1}(\mr^2)$ and ${H^k}(\mathbb{T}^2)$ with $k \geq 20$. 
Recently,  Bianchini, Crin-Barat and Paicu improved this result  in \cite{paicu-2024-arma-IPM} by assuming  the initial perturbations belonging to
$\dot{H}^{s}(\mr^2)\cap \dot{H}^{\tau}(\mr^2)$ with $0<\tau<1, s\geq 3+\tau$.
For the case of the flat strip $\mathbb{S}=\mathbb{T}\times [-l,l]$ with $0<l<\infty$, the stability of the stratified steady state solutions was proved in \cite{castro-2019-local-arma} under the condition $\|\rho_0-g(x_2)\|_{H^k(\mathbb{S})}\leq \varepsilon$ with $k \geq 10$. More   recently, this result was refined in \cite{park-arxiv-2024-IPM-strip} under slightly less restrictive regularity assumptions, requiring only $\|\rho_0-g(x_2)\|_{H^k(\mathbb{S})}\leq \varepsilon$ with $k \geq 3$. Additionally, we highlight the work in \cite{kim-IPM-2024-JFA}, which establishes asymptotic stability  in the torus $\mathbb{T}^2$, or the strip $\mathbb{S}$, or the whole space $\mathbb{R}^2$ for a perturbation in $H^k$ for $k>3$, provided  that the vertical derivative of the steady
state $g(x_2)$ is sufficiently large, depending on the size of the perturbation. It is also noteworthy that Kiselev and Yao \cite{yao-2023-arma-smallscale-ipm} constructed examples of solutions exhibiting infinite growth of derivatives over time, demonstrating nonlinear instability for a class of stratified steady states of the IPM equation on $\mathbb{T}^2$ and $\mathbb{S}$.

{\bf Ill-posedness} 
Since the pioneering work by Bourgain and Li on the strong ill-posedness of the Euler equations in critical spaces (\cite{euler-2-illpose-bourgain-2015-IA}),  there have been many remarkable results regarding   ill-posedness of the Euler equations (see \cite{euler-1-illpose-elgindi-2017,elgindi2020infty,euler-3-illpose-bourgain-2015-GFA,euler-10-illpose-lxyt-2024-arxiv,euler-11-illpose-my-2016-ma,euler-4-illpose-bourgain-2021-IMRN}) and SQG equations (see \cite{sqg-1-illpose-cjk-2024-arixv,sqg-2-illpose-cordoba-2022-advance,sqg-3-illpose-cordoba-2024-cmp,sqg-4-illpose-kim-jeong-2024-analpde}). 
Given the similarities between the SQG and IPM equations, particularly the same order of their Biot-Savart laws, it is natural to conjecture that analogous ill-posedness results may hold for the IPM equations. However, the notable differences in symmetry and anisotropy between the two systems make the ill-posedness problem of the 2D IPM equations a non-trivial challenge. Recently, Bianchini et al. \cite{cordoba-2024-arxiv-ipm-ill}  posted an article on the arXiv with a significant  strong ill-posedness result in critical space $H^2$  near the linear stratified steady state $g(x_2)=-x_2$. 
 %The remarkable novelty of this result lies in the fact that, despite the presence of partial dissipation in the equation, they demonstrated the ill-posedness of system \eqref{1.1} in the critical space by constructing perturbations that counteract the stabilizing effect of the steady-state solutions near the origin. 

   The goal of this paper is to establish the  ill-posedness result for perturbations of  a more general stable profile $g(x_2)$. 
   Specifically, we construct solutions that are initially in the space of compactly supported, infinitely differentiable functions $C_c^\infty(\mr^2)$ and small in the critical Sobolev space $W^{1,\infty}(\mr^2)$, but the solutions   become large in $W^{1,\infty}(\mr^2)$ within a short period of time.
   % Just a few days before the paper was completed,
%   we find that Bianchini et al.  posted an article on the arXiv with a significant  strong ill-posedness result in critical space $H^2$  near the linear stratified steady state $g(x_2)=-x_2$ (\cite{cordoba-2024-arxiv-ipm-ill}). For more details about  the specific definitions of mild and strong ill-posedness, see  the definition below.
%The following definition of ill-posedness can be found in \cite{euler-2-illpose-bourgain-2015-IA} or \cite{elgindi2020infty}.
%\begin{definition}
%	We consider that a Cauchy problem
%	\begin{align*}
%		\begin{cases}
%			f_t=N(f)\\
%			f(0)=f_0
%		\end{cases}
%	\end{align*}
%	is mildly ill-posed in a space $X$. If there exists a space $Y$ continuously embedded in X and a constant $c > 0$ such that for all $\varepsilon, \delta > 0$, there exists $f_0 \in Y$, with\begin{align*}
%		\|f_0\|_X\leq \varepsilon,
%	\end{align*}
%	for which there exists a unique local solution $f(t) \in L^\infty([0, T]; Y)$ for some $T > 0$ with initial data $f_0$, but
%	\begin{align*}
%		\|f(t)\|_X\geq c
%	\end{align*}
%	for $0\leq t\leq \delta$. If $c$ can be taken to be equal to $\frac{1}{\varepsilon}$, then we call the equation is strongly ill-posed.
%\end{definition}
Our main result can be summarized stated as follows.
\begin{theorem}\label{thm1}
	Let $g(x_2)$ be any horizontal stratified state satisfying $g'(x_2)\not\equiv0$ and $g'(x_2)\in W^{2,\infty}(\mr)$. For every sufficiently small $\varepsilon>0$, there exists a initial data $\eta_0({\bf x})\in  C^\infty_c(\mr^2)$ satisfying 
	\begin{align*}
		\left\| \eta_0\right\|_{W^{1,\infty}(\mr^2)}\leq \varepsilon,
	\end{align*}
	such that  the unique local solution $\eta(\cdot,t)$  of system \eqref{p-IPM} with initial data $\eta_0$  satisfies
	\begin{align*}
		\sup_{0\leq t\leq \tilde{c}\varepsilon}\left\| \eta(\cdot,t)\right\|_{W^{1,\infty}(\mr^2)}\geq \tilde{C}.
	\end{align*}
	Here, $\tilde{c}, \tilde{C}$ are universal constants that are independent of $\varepsilon$ but depend on the choice of $g(x_2)$.
\end{theorem}

\begin{remark}In contrast to  the work by Bianchini et al. \cite{cordoba-2024-arxiv-ipm-ill}, 
	  which established the  strong ill-posedness of the IPM equation for initial data that are small $H^2$ perturbations of the linearly stable profile $-x_2$, our work  demonstrates mild ill-posedness in the critical space	$W^{1,\infty}$ (For detailed definitions of strong and mild ill-posedness, readers are referred to  \cite{euler-2-illpose-bourgain-2015-IA} and \cite{elgindi2020infty}). 
\end{remark}
\begin{remark}
	We emphasize that our result holds for \emph{arbitrary} vertically stratified density profiles $g(x_2)$, with \emph{no} assumptions required on the sign of $g'(x_2)$. Physically, since gravity acts downward, density profiles with $g'(x_2) < 0$ typically represent stable configurations (density decreasing with height), while those with $g'(x_2) > 0$ normally correspond to unstable scenarios. 
	Remarkably, our analysis uncovers a surprising instability mechanism that persists even when $g'(x_2) < 0$ and $g'\in W^{2,\infty}(\mathbb{R})$. To the best of  our knowledge, this constitutes the first rigorous demonstration of ill-posedness for the IPM equation with nonlinear vertical density stratification.
\end{remark}

%\begin{remark}
%\textcolor{red}{Compared with previous literature, we reduce the restrictions on the stratified steady state, requiring only that $g(x_2)\in B^{\frac{2}{p}}_{p,1}(\mr^2)$ and $g'(x_2)\not\equiv0$.}
%\end{remark}

%\begin{remark}
	
%	It is emphasized that our result holds for any vertically layered density  $g(x_2)$ without requirement on the sign of $g'(x_2).$
%Indeed,  $g(x_2)$ can be viewed as  the density function depending only on the vertical coordinate.
%	Since gravity is downward, $g'(x_2) < 0$ often corresponds to stability,
%while	$g '(x_2) > 0$ can be speculated to be unstable, from the physical intuition.	Surprisingly, we get an unstable result when $g'(x_2) < 0$ and $g'\in W^{2,\infty}(\mr)$.
%More precisely,	
% we find a class of initial data such that $\left\| \rho_0-g(x_2)\right\|_{W^{1,\infty}}$ is sufficently small, but $\left\| \rho(\cdot,t)-g(x_2)\right\|_{W^{1,\infty}(\mr^2)}$ is large.
	
%\end{remark}

%\begin{remark}
%	We answer the above question about whether the IPM equation \eqref{1.1} is ill-posed, and we give a mild ill-posed answer. 
%    But recently, there is abundant literature on the strong ill-posedness of the SQG (Surface Quasi-Geostrophic) equation. 
%	Since the IPM equation \eqref{1.1} and the SQG equation are both transport equations, and their  Biot-Savart laws are all zero-order operators, the two equations are closely related. Therefore, the strong ill-posedness of IPM equation \eqref{1.1} is a very interesting problem.	
%\end{remark}

{\bf Ideas of proof}~~
The main idea demonstrated in this paper can be summarized in the following three steps:

{\bf Step 1. Provide two local prior bounds} 

Assume that $\eta \in L^\infty\left(0,T; B^{\frac{2}{p}+1}_{p,1}(\mr^2)\right)$ for some $T>0$ is a  solution of system  \eqref{p-IPM} corresponding to the initial value $\eta_0\in B^{\frac{2}{p}+1}_{p,1}(\mr^2)$ with $2< p<\infty$, then, for $t\in[0,T],$
\begin{align*}
	\lef \eta(t)\rig_{B^{\frac{2}{p}+1}_{p,1}}\leq \dfrac{\frac{c_g}{2}+2\left\| \eta_0\right\|_{B^{\frac{2}{p}+1}_{p,1}}}{1-C\left(\frac{c_g}{2}+2\left\| \eta_0\right\|_{B^{\frac{2}{p}+1}_{p,1}}\right)t},
\end{align*}
and \begin{align*}
	\|\eta(t)\|_{B^{\frac{2}{p}}_{p,1}}\leq \|\eta_0\|_{B^{\frac{2}{p}}_{p,1}}\e^{Ct\left(\displaystyle\sup_{0\leq s\leq t}\|\eta(s)\|_{B^{\frac{2}{p}+1}_{p,1}}+c_g\right)},
\end{align*}
where  $c_g$  is a fixed constant characterizing the background stratification.

 {\bf Step 2. Construction of the right initial data}

Inspired by the work of Elgindi and Masmoudi on the logarithmic singularity of Riesz operators \cite{elgindi2020infty}, we introduce a novel weighted estimation method for general steady states (where 
$g'(x_2)$ is not identically zero). This approach preserves the delicate balance required for singularity analysis while effectively capturing nontrivial stratification effects.

To establish a lower bound on $\|\nabla \eta\|_{L^\infty},$ it is essential to construct initial data that not only induces growth at
 $t=0,$ but also ensures that the nonlinear evolution does not suppress this growth mechanism for $t>0.$ Specifically, we design the initial data for system \eqref{p-IPM} as follows:
\begin{align*}
	\eta_0({\bf x})=	
		\rho_0({\bf x})-g(x_2)=\dfrac{f_N({\bf x})}{\sqrt{N}},	
\end{align*}
where \begin{align*}
	f_N({\bf x}) =\partial_{x_1}\Delta \big(G_N({\bf x}-{\bf x_0})\chi ({\bf x}-{\bf x_0})\big),
\end{align*}\begin{align*}
	G_N({\bf x})=\left(x_1^3x_2-x_1x_2^3\right)\log(x_1^2+x_2^2+\frac{1}{2^N}).
\end{align*}
Here, the point ${\bf x_0}=(x_0,y_0)$ is selected  to satisfy  $g'({\bf x_0})=g'(y_0)\neq  0$, and  $\chi({\bf x})$ is a smooth cut-off function 
\begin{align*}
	\chi({\bf x})=&
	\begin{cases}
		1, \text{~on~} B({\bf 0}; 1)=\{{\bf x}=(x_1,x_2);~|{\bf x}-{\bf 0}|\leq 1\},&\\
	 0, \text{~on~} B({\bf 0}; 2)^c=\{{\bf x}=(x_1,x_2);~|{\bf x}-{\bf 0}|> 2\}.\end{cases}
\end{align*}

%{\bf $\bullet$ Construction of special function.}

After careful verification,
for $\forall$ $i,j,k,l\in\{1, 2\}$, we conclude that from a degree point of view,
\begin{align*}
	&\partial_i\Delta G_N(x_1,x_2)\approx \left(x_1^2+x_2^2+\frac{1}{2^N}\right)^\frac{1}{2},\\
	&\partial_i\partial_j\Delta G_N(x_1,x_2)\approx 1,\\
	&\partial_i\partial_j\partial_k\Delta G_N(x_1,x_2)\approx \dfrac{1}{\left(x_1^2+x_2^2+\frac{1}{2^N}\right)^\frac{1}{2}},\\
	&\partial_i\partial_j\partial_k\partial_l\Delta G_N(x_1,x_2)\approx \dfrac{1}{x_1^2+x_2^2+\frac{1}{2^N}},
\end{align*} where $\partial_i=\partial_{x_i}.$
However, if $\Delta$ is not applied to $G_N$, and $\partial_{1}^2$ is applied, then logarithmic singularity will appear. In fact, $\partial_{1}^2$ is equal to $-\mathcal{R}_1^2 \Delta$, so we have
\begin{align*}
	\partial_{1}^3	\partial_{2}G_N(x_1,x_2)=6\log(x_1^2+x_2^2+\dfrac{1}{2^N})+G^1_N(x_1,x_2),
\end{align*}
where $\left\|G^1_N\right\|_{L^\infty (B({\bf 0};2)) }\leq C_2$. 

Eventually, it yields that
\begin{align*}
	\|f_N(\cdot)\|_{W^{1,\infty}}\leq c,&~\|f_N(\cdot)\|_{B^{\frac{2}{p}}_{p,1}}\leq  c\left(\log N\right)^{\frac{1}{2}}, ~\|f_N(\cdot)\|_{B^{\frac{2}{p}+1}_{p,1}}\leq cN,\\
	&\|g'(x_2)\mathcal{R}_1^2\nabla^\perp f_N({\bf x})\|_{L^\infty(\mr^2)}\geq c_1N,
\end{align*}
with $2<p<\infty$ and $c, c_1$ are independent of $x_0, y_0$ and $N$. 
More details refer to Lemma \ref{fN-sequence}.
Accordingly, \begin{align*}
	\|\eta_0(\cdot)\|_{W^{1,\infty}}\leq \dfrac{c}{\sqrt{N}},&~\|\eta_0(\cdot)\|_{B^{\frac{2}{p}}_{p,1}}\leq  \dfrac{c\left(\log N\right)^{\frac{1}{2}}}{\sqrt{N}}, ~\|\eta_0(\cdot)\|_{B^{\frac{2}{p}+1}_{p,1}}\leq c\sqrt{N},\\
	&\|g'(x_2)\mathcal{R}_1^2\nabla^\perp \eta_0({\bf x})\|_{L^\infty(\mr^2)}\geq c_1\sqrt{N}.
\end{align*}

{\bf Step 3. Lower bound estimation of $\|\nabla \eta\|_{L^\infty}$.}

To obtain the lower bound estimate for $\|\nabla \eta\|_{L^\infty}$, we apply $\nabla^\perp$ to both sides of \eqref{p-IPM}$_1$ instead of $\nabla.$ This is mainly due to the fact that the Biot-Savart law governing the velocity field $u$ involves the gradient $\nabla^\perp$, applying $\nabla^\perp$  makes the equation more concise.

As a result, 
denote  $\Theta=\nabla^\perp \eta$, then $\Theta$ satisfies \begin{align}\label{theta}
	\begin{cases}
			\partial_t \Theta+u\cdot \nabla\Theta=-g'(x_2)\mathcal{R}_{1}^2 \Theta-\left(\nabla^\perp g'(x_2)\right)\mathcal{R}_{1}^2 \eta+\Theta\cdot\nabla u,\\[1mm]
		u=(-\Delta)^{-\frac12}\mathcal{R}_1\Theta,\\[1mm]
		\Theta({\bf x},0)=\Theta_0=\nabla^\perp\eta_0({\bf x}).
	\end{cases}
\end{align}
Compounding on both sides of $\eqref{theta}_1$ the Lagrangian flow-map $\Phi$  transported by the velocity $u$, we get  
\begin{align*}
			\partial_t \left(\Theta\circ \Phi\right)&+g'(x_2)\mathcal{R}_{1}^2 \left(\Theta\circ\Phi\right)\\
&=\left(-g' \circ\Phi\right)\left[\mathcal{R}_{1}^2 ,\Phi\right]\Theta+\left(g'-g' \circ\Phi\right)\mathcal{R}_{1}^2 \left(\Theta\circ\Phi\right)\\
&~~~~~~+\left(\Theta\cdot\nabla u-\left(\nabla^\perp g' \right)\mathcal{R}_{1}^2 \eta\right)\circ \Phi
\end{align*}	where
\begin{align*}
\left[\mathcal{R}_{1}^2 ,\Phi\right]\Theta=\mathcal{R}_{1}^2 (\Theta)\circ\Phi-\mathcal{R}_{1}^2 \left(\Theta\circ\Phi\right).
\end{align*}
By Duhamel's principle,
\begin{align*}
	\Theta\circ \Phi=\e^{-g'\mathcal{R}_{1}^2 t}\Theta_0&+\int_{0}^t\e^{-g'\mathcal{R}_{1}^2 (t-\tau)}\left(-g' \circ\Phi\right)\left[\mathcal{R}_{1}^2 ,\Phi\right]\Theta(\tau)\dd\tau\\\nonumber
&+\int_{0}^t\e^{-g'\mathcal{R}_{1}^2 (t-\tau)}\left(\left(g'-g' \circ\Phi\right)\mathcal{R}_{1}^2 \left(\Theta\circ\Phi\right)\right)(\tau)\dd\tau\\
&+\int_{0}^t\e^{-g'\mathcal{R}_{1}^2 (t-\tau)}\left(\left(\Theta\cdot\nabla u-\left(\nabla^\perp g' \right)\mathcal{R}_{1}^2 \eta\right)\circ \Phi\right)(\tau)\dd\tau.
\end{align*}
%For the relatively complicated nonlinear term,
%\begin{align*}
%	\|\Theta\cdot \nabla u\|_{B^\frac{2}{p}_{p,1}}=\|\Theta\cdot \nabla \left(\nabla^\perp (-\Delta)^{-1}\partial_{x_1}\eta\right)\|_{B^\frac{2}{p}_{p,1}}.
%\end{align*}
%This involves the Riesz operator, for which we introduce the product estimate with the Riesz operator,
%\begin{align*}
%		\|\Theta\cdot \nabla u\|_{B^\frac{2}{p}_{p,1}}\leq C\|\Theta\|_{L^\infty}\|\Theta\|_{B^\frac{2}{p}_{p,1}}
%\end{align*}
%with a more general form provided in Lemma \ref{riesz-product}.
In fact,  we  can obtain the following lower bound estimate (see Section 4.1  for more details)
\begin{align*}
	\|\Theta\|_{L^\infty}\geq &t\left\| -g'\mathcal{R}_{1}^2 \Theta_0\right\|_{L^\infty}-\left\|\Theta_0\right\|_{L^\infty}-Ct^2\left\| \Theta_0\right\|_{B^{\frac{2}{p}}_{p,1}}\\\nonumber
	&~~-Ct^2\|\nabla u\|_{L^\infty_tL^\infty}\left(1+\e^{2t \|\nabla u\|_{L^\infty_tL^\infty}}\right)\|\Theta\|_{L^\infty_tB^{\frac{2}{p}}_{p,1}}\\
	&~~-Ct\left(1+\e^{2t \|\nabla u\|_{L^\infty_tL^\infty}}\right)\left(\|\Theta\|_{L^\infty_tL^\infty}\|\Theta\|_{L^\infty_tB^\frac{2}{p}_{p,1}}+\|\eta\|_{L^\infty_tB^\frac{2}{p}_{p,1}}\right).
\end{align*}
Here, $L^\infty_tL^\infty$ and $L^\infty_tB^\frac{2}{p}_{p,1}$ represent $L^\infty\left([0,t]; L^\infty(\mr^2)\right)$ and $L^\infty\left([0,t]; B^\frac{2}{p}_{p,1}(\mr^2)\right)$, respectively.

Recall the special $\eta_0$ constructed in Step 2 and the prior estimates in Step 1, and we are able to get
\begin{align*}
	\|\Theta\|_{L^\infty}\geq \frac{\bar{c}_1}{2}t\sqrt{N}-\bar{c}_2t^2N-\bar{c}_3t\sqrt{N}\|\Theta\|_{L^\infty_tL^\infty},
\end{align*}
by choosing $t$ sufficiently small and $N$ sufficiently large.
It follows by the continuity method that 
for sufficiently small $t\sim \frac{1}{\sqrt{N}}$, 
\begin{align*}
	\|\Theta(t)\|_{L^\infty}\geq \tilde{C}.
\end{align*}
See
a detailed explanation in Section \ref{sec:5.2}.

The paper is organized as follows. Section \ref{sec:2} provides a list of
lemmas to be used in the proofs of the theorem stated above and the propositions below.  Section \ref{sec:3} presents a detailed construction of a sequence of functions with special properties. The
construction of the initial data in the proof of Theorem \ref{thm1} makes use of this special sequence.
In Section \ref{sec:4}, we briefly describe the local existence and uniqueness of the IPM equation  and  establish two key local priori bounds.  Section \ref{sec:5} is  divided into two parts. Subsection \ref{sec:5.1} is devoted to 
 obtaining a lower bound estimate for $\nabla \eta$ and  subsection \ref{sec:5.2} is devoted to proving Theorem \ref{thm1}.

{\bf Notations:} 
\begin{itemize}
	\item In the following, we  use $C$ to denote various constants whose values may be different from line to line. Occasionally, we use $c$ (or $c$ with a subindex) to mark some crucial constants. 
	\item For notational simplicity, throughout this paper we adopt the following convention unless otherwise specified:
	\begin{enumerate}
		\item In Section \ref{sec:2} (Preliminaries) and the Appendix: $\|f\|_{X}$ denotes $\|f\|_{X(\mr^n)}$,
			\item In all other Sections: $\|f\|_{X}$ denotes $\|f\|_{X(\mr^2)}$.
	\end{enumerate}
Here, $X$ may denote any of these function spaces: homogeneous Besov space $\dot{B}^s_{p,r}$, nonhomogeneous Besov space $B^s_{p,r}$, Lebesgue space $L^p$, or Sobolev space $W^{s,p}$.
\end{itemize}

\section{Preliminaries}\label{sec:2}

\subsection{Key analytical estimates on Besov spaces}\label{sec:2.1}
For the definition and basic properties of Besov spaces, we refer to Appendix \ref{sec:appendixA}. 

In what follows, we shall establish a logarithmic-type inequality characterizing the difference between the homogeneous Besov spaces $\dot{B}^{s}_{p,r_1}$ and $\dot{B}^{s}_{p,r_2}$. This result will be particularly useful for estimating the function $f_N$ in Besov spaces in Proposition \ref{fN-sequence}.
\begin{lemma}\label{besov-log-lemma}
	There exists a constant $C>0$ such that for all $s\in\mathbb{R}, \nu>0, n\geq 2,  1\leq r_1\leq \min\{r_2,r_3\}$ and $1\leq p\leq \infty$, 
	\begin{align}\label{besov-embed-log}
		\|f\|_{\dot{B}^s_{p,r_1}(\mr^n)}\leq C\dfrac{1+\nu}{\nu}\left(1+\|f\|_{\dot{B}^s_{p,r_2}(\mr^n)}\left[\log_2\left(\e+\|f\|_{\dot{B}^{s-\nu}_{p,r_3}(\mr^n)}+\|f\|_{\dot{B}^{s+\nu}_{p,r_3}(\mr^n)}\right)\right]^{\frac{1}{r_1}-\frac{1}{r_2}}\right).
	\end{align}
\end{lemma}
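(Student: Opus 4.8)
\textbf{Proof proposal for Lemma \ref{besov-log-lemma}.}

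The plan is to exploit the dyadic definition of the homogeneous Besov norms and split the summation over frequencies into a "low/middle" block of controlled size and a "high" tail, optimizing the cut-off. Fix $f$ and write $a_j \define 2^{js}\|\dot\Delta_j f\|_{L^p}$, $b_j^{\pm} \define 2^{j(s\pm\nu)}\|\dot\Delta_j f\|_{L^p} = 2^{\pm j\nu} a_j$. Thus $\|f\|_{\dot B^s_{p,r_2}} = \|(a_j)\|_{\ell^{r_2}}$, $\|f\|_{\dot B^{s\mp\nu}_{p,r_3}} = \|(b_j^{\mp})\|_{\ell^{r_3}}$, and the target is $\|(a_j)\|_{\ell^{r_1}}$. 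Denote $A \define \|(a_j)\|_{\ell^{r_2}}$ and $B \define \|(b_j^-)\|_{\ell^{r_3}} + \|(b_j^+)\|_{\ell^{r_3}}$; we may assume $0 < A \le B < \infty$ and (by homogeneity) normalize $A = 1$, so that we must bound $\|(a_j)\|_{\ell^{r_1}}$ by $C\big(1 + [\log B]^{1/r_1 - 1/r_2}\big)$.

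Next I would use the standard $\ell^{r_1} \hookrightarrow \ell^{r_2}$-type trick in the reverse direction, quantified by an integer parameter $M$ to be chosen. Since $r_1 \le r_2$, for any finite index set $E$ of cardinality $\le 2M+1$ one has by Hölder $\sum_{j\in E} a_j^{r_1} \le (2M+1)^{1 - r_1/r_2}\big(\sum_{j} a_j^{r_2}\big)^{r_1/r_2} = (2M+1)^{1-r_1/r_2}$. For the complementary indices I would further split into $j > M$ and $j < -M$. On the high side, $a_j = 2^{-j\nu} b_j^+ \le 2^{-j\nu} B$ for each $j$, and since $r_1 \le r_3$ is \emph{not} assumed one should instead argue directly: $\sum_{j > M} a_j^{r_1} = \sum_{j>M} 2^{-j\nu r_1}(b_j^+)^{r_1}$; applying Hölder in $j$ with exponents $r_3/r_1$ and its conjugate gives $\le \big(\sum_{j>M} 2^{-j\nu r_1 \cdot (r_3/r_1)'}\big)^{1/(r_3/r_1)'}\big(\sum_j (b_j^+)^{r_3}\big)^{r_1/r_3} \le C_\nu\, 2^{-c_\nu M}\, B^{r_1}$ for constants depending only on $\nu, r_1, r_3$ (and when $r_3 = r_1$ this is just $\sum_{j>M} 2^{-j\nu r_1}(b_j^+)^{r_1} \le 2^{-M\nu r_1} B^{r_1}$). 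The symmetric estimate with $b_j^-$ handles $j < -M$. Collecting, $\|(a_j)\|_{\ell^{r_1}}^{r_1} \le C(2M+1)^{1-r_1/r_2} + C 2^{-c_\nu M} B^{r_1}$.

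Finally I would choose $M \approx \frac{r_1}{c_\nu}\log_2 B$ (so that $2^{-c_\nu M} B^{r_1} \lesssim 1$), legitimate because $B \ge A = 1$ guarantees $M \ge 0$; the rounding to an integer only changes constants. This yields $\|(a_j)\|_{\ell^{r_1}}^{r_1} \le C\big(1 + \log B\big)^{1-r_1/r_2}$, i.e. $\|(a_j)\|_{\ell^{r_1}} \le C\big(1 + \log B\big)^{1/r_1 - 1/r_2}$, and undoing the normalization $A = 1$ restores the quotient $B/A$ inside the logarithm and the prefactor $\|f\|_{\dot B^s_{p,r_2}}$, giving exactly \eqref{besov-embed-log} after the elementary bound $(1+\log x)^\alpha \le C(1 + [\log x]^\alpha)$ for $x \ge 1$, $\alpha \in [0,1]$.

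The main obstacle is bookkeeping rather than conceptual: making the tail estimate uniform without assuming $r_1 \le r_3$ forces the extra Hölder step in $j$, and one must track that all emergent constants depend only on $\nu, r_1, r_2, r_3$ (not on $s$ or $p$), which is what makes the stated inequality scale-invariant and usable for the $N$-dependent bounds on $f_N$ in Proposition \ref{2.1}. Care is also needed at the degenerate endpoints $r_1 = r_2$ (where the exponent $1/r_1 - 1/r_2$ vanishes and the claim is trivial) and $r_1 = 1$, but these only simplify the argument.
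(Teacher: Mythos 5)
Your proposal is correct and follows essentially the same route as the paper's proof: both split the dyadic sum into a middle block of $O(M)$ (resp. $O(\sigma)$) terms handled by H\"older against the $\ell^{r_2}$ norm, control the two tails by H\"older in $j$ with exponents $r_3/r_1$ and its conjugate against the shifted-regularity $\ell^{r_3}$ norms, and then choose the cut-off $M\sim \frac{1}{\nu}\log_2\bigl(B/A\bigr)$ to balance the two contributions. The normalization $A=1$ is the only (cosmetic) difference; no further comment is needed.
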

\begin{proof}
	Using the definition of homogeneous Besov norms, we decompose the norm into three parts:
	\[
	\|f\|_{\dot{B}^s_{p,r_1}}^{r_1} = \underbrace{\sum_{k\leq -\sigma} \left(2^{ks}\|\dot{\Delta}_{k}f\|_{L^p}\right)^{r_1}}_{I_1} 
	+ \underbrace{\sum_{|k|<\sigma} \left(2^{ks}\|\dot{\Delta}_{k}f\|_{L^p}\right)^{r_1}}_{I_2} 
	+ \underbrace{\sum_{k\geq \sigma} \left(2^{ks}\|\dot{\Delta}_{k}f\|_{L^p}\right)^{r_1}}_{I_3},
	\]
	where $\sigma > 0$ will be chosen later.
	
	\textbf{Estimate of $I_2$:} By H\"older's inequality with exponents $\frac{r_2}{r_1}$ and $\frac{r_2}{r_2-r_1}$, we obtain:
	\[
	I_2 \leq (2\sigma-1)^{\frac{r_2-r_1}{r_2}} \|f\|_{\dot{B}^s_{p,r_2}}^{r_1}.
	\]
	
	\textbf{Estimate of $I_1$ and $I_3$:} For any $\nu > 0$, we have
	\[
	I_1 \leq \left(\sum_{k\leq -\sigma} 2^{k\nu \frac{r_1r_3}{r_3-r_1}}\right)^{\frac{r_3-r_1}{r_3}} \|f\|_{\dot{B}^{s-\nu}_{p,r_3}}^{r_1} 
	\leq \frac{2^{-\sigma\nu r_1}}{\left(1-2^{-\nu \frac{r_1r_3}{r_3-r_1}}\right)^{\frac{r_3-r_1}{r_3}}} \|f\|_{\dot{B}^{s-\nu}_{p,r_3}}^{r_1}.
	\]
	Similarly,
	\[
	I_3 \leq \frac{2^{-\sigma\nu r_1}}{\left(1-2^{-\nu \frac{r_1r_3}{r_3-r_1}}\right)^{\frac{r_3-r_1}{r_3}}} \|f\|_{\dot{B}^{s+\nu}_{p,r_3}}^{r_1}.
	\]
	By	combining these estimates, we get 
	\[
	\|f\|_{\dot{B}^s_{p,r_1}} \leq (2\sigma-1)^{\frac{r_2-r_1}{r_1r_2}} \|f\|_{\dot{B}^s_{p,r_2}} 
	+ \frac{2^{-\sigma\nu}}{\left(1-2^{-\nu \frac{r_1r_3}{r_3-r_1}}\right)^{\frac{r_3-r_1}{r_1r_3}}} \left(\|f\|_{\dot{B}^{s-\nu}_{p,r_3}} + \|f\|_{\dot{B}^{s+\nu}_{p,r_3}}\right).
	\]
	Choosing $\sigma$ being a closest positive integer to $\dfrac{1}{\nu}\log_2\left(\e+\|f\|_{\dot{B}^{s-\nu}_{p,r_3}}+\|f\|_{\dot{B}^{s+\nu}_{p,r_3}}\right)$ yields \eqref{besov-embed-log}.
\end{proof}

At the conclusion of this subsection, we recall an important commutator estimate and establish two key product estimates. The first estimate concerns a function interacting with a classical Riesz operator, while the second deals with a function that requires no integration.
\begin{lemma}[\cite{GTM-fourier-grafakos}]\label{riesz-commutator}
	Let $n\geq 2$ and $\mathcal{R}$ be a Riesz operator, the following estimate are valid,
	\begin{align*}
		\left\|\mathcal{R}(f g)-\mathcal{R}(f) g\right\|_{L^p(\mr^n)}&\leq C\|f\|_{BMO(\mr^n)}\|g\|_{L^p(\mr^n)}\\
		&\leq C\|f\|_{L^\infty(\mr^n)}\|g\|_{L^p(\mr^n)}.
	\end{align*}
	where $1<p<\infty$ and $C$ depends only on $p,n$.
\end{lemma}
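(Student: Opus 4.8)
The plan is to identify the left-hand side as a commutator of the singular integral $\mathcal{R}$ with a multiplication operator and then to invoke the Coifman--Rochberg--Weiss commutator theorem. Writing $M_h\colon\phi\mapsto h\phi$ for multiplication by $h$, one has the algebraic identity $\mathcal{R}(fg)-\mathcal{R}(f)\,g=\mathcal{R}(M_g f)-M_g(\mathcal{R}f)=[\mathcal{R},M_g]f$, so that (up to the harmless interchange of the roles of $f$ and $g$) the assertion is precisely the $L^p(\mr^n)$-boundedness, $1<p<\infty$, of the commutator $[\mathcal{R},M_g]$ with operator norm at most $C\|g\|_{BMO}$.

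The coarser estimate, with $\|g\|_{L^\infty}$, would actually follow with no use of cancellation: since $\mathcal{R}$ is a convolution Calder\'on--Zygmund operator it is bounded on $L^p$, hence $\|\mathcal{R}(fg)\|_{L^p}+\|g\|_{L^\infty}\|\mathcal{R}f\|_{L^p}\le C\|g\|_{L^\infty}\|f\|_{L^p}$, and combined with $\|g\|_{BMO}\le 2\|g\|_{L^\infty}$ this also gives the passage from the first to the second displayed inequality. The genuine content is thus the first (the $BMO$ bound), which is exactly the Coifman--Rochberg--Weiss estimate recorded in Grafakos' book, whence the citation. I would reproduce its proof by the sharp maximal function route: establish the pointwise bound
\begin{align*}
	M^{\#}\big([\mathcal{R},M_g]f\big)(x)\le C\,\|g\|_{BMO}\Big(M_r(\mathcal{R}f)(x)+M_r f(x)\Big),\qquad 1<r<\infty,
\end{align*}
by splitting $g$, over a fixed cube $Q\ni x$, into its mean $\langle g\rangle_Q$ and the remainder, using the mean oscillation of $g$ for the local part and the kernel decay of $\mathcal{R}$ together with the John--Nirenberg inequality for the tail; then conclude via the Fefferman--Stein inequality $\|h\|_{L^p}\le C\|M^{\#}h\|_{L^p}$, the $L^p$-boundedness of $\mathcal{R}$, and that of the Hardy--Littlewood maximal operator $M_r$ for $r$ close to $1$. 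Finally one specializes $\mathcal{R}$ to a Riesz transform $\mathcal{R}_j$.

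The main obstacle is precisely that pointwise sharp maximal function estimate, and in particular securing the correct \emph{linear} dependence on $\|g\|_{BMO}$: one must absorb the oscillation of $g$ in the local term and exploit the off-diagonal decay of the Calder\'on--Zygmund kernel, in combination with John--Nirenberg, in the tail term; all remaining steps (Fefferman--Stein, $L^p$-boundedness of $\mathcal{R}$ and $M_r$, and the embedding $L^\infty\hookrightarrow BMO$) are routine. Alternatively, one could bypass the sharp maximal function via the conjugation trick, studying $e^{zg}\mathcal{R}\,e^{-zg}$ for $z$ in a small disc, bounding its $L^p$ norm uniformly in $z$ through John--Nirenberg, and recovering $[\mathcal{R},M_g]$ as the first Taylor coefficient by a Cauchy integral in $z$.
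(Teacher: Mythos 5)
Your reduction of the left-hand side to the commutator $[\mathcal{R},M_g]f=\mathcal{R}(fg)-g\,\mathcal{R}(f)$ is algebraically correct, and the sharp-maximal-function route (or the conjugation trick) is indeed the standard proof of the Coifman--Rochberg--Weiss theorem cited from Grafakos; the paper gives no proof of its own, so that is the right reference point. The genuine gap is the parenthetical ``up to the harmless interchange of the roles of $f$ and $g$.'' That interchange is not harmless. What Coifman--Rochberg--Weiss gives, and what your outline actually establishes, is $\left\|[\mathcal{R},M_g]f\right\|_{L^p}\leq C\|g\|_{BMO}\|f\|_{L^p}$: the $BMO$ (or $L^\infty$) norm falls on the \emph{multiplier} $g$, the $L^p$ norm on the \emph{argument} $f$. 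The lemma asserts the transposed bound $C\|f\|_{BMO}\|g\|_{L^p}$, and since $\mathcal{R}(fg)-\mathcal{R}(f)g$ is not symmetric in $f$ and $g$, this is a genuinely different statement --- indeed a false one. For the $BMO$ version: with the usual convention $\mathcal{R}(\mathrm{const})=0$, replacing $f$ by $f+c$ leaves $\|f\|_{BMO}$ unchanged but changes the left-hand side by $c\,\mathcal{R}(g)$, whose $L^p$ norm is unbounded in $c$. For the $L^\infty$ version: take a compactly supported $f\in L^\infty$ with $\mathcal{R}f\notin L^\infty$ (such $f$ exist precisely because $\mathcal{R}$ is unbounded on $L^\infty$), let $E\subset\{|\mathcal{R}f|\geq M\}$ have positive finite measure and set $g=|E|^{-1/p}\chi_E$; then $\|\mathcal{R}(f)\,g\|_{L^p}\geq M$ while $\|\mathcal{R}(fg)\|_{L^p}\leq C\|fg\|_{L^p}\leq C\|f\|_{L^\infty}$, so the commutator has $L^p$ norm at least $M-C\|f\|_{L^\infty}$ even though $\|f\|_{L^\infty}\|g\|_{L^p}$ stays fixed.

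The same transposition occurs in your ``coarser estimate'' paragraph: the triangle-inequality argument yields $C\|g\|_{L^\infty}\|f\|_{L^p}$, not $C\|f\|_{L^\infty}\|g\|_{L^p}$, and without cancellation the term $\mathcal{R}(f)\,g$ with $f\in L^\infty$, $g\in L^p$ cannot be handled at all, since $\mathcal{R}f$ lies only in $BMO$. In short, you have correctly proved $\|\mathcal{R}(fg)-\mathcal{R}(f)g\|_{L^p}\leq C\|g\|_{BMO}\|f\|_{L^p}\leq C\|g\|_{L^\infty}\|f\|_{L^p}$; to obtain the right-hand side printed in the lemma one would instead need the left-hand side $\mathcal{R}(fg)-f\,\mathcal{R}(g)$. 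A careful write-up must flag this mismatch rather than absorb it into ``harmless,'' all the more so because the form in which the lemma is later invoked (in the proof of Lemma \ref{riesz-product}, where $\mathcal{R}$ hits $S_{k-1}g$ alone while the $L^\infty$ norm is taken of $S_{k-1}g$ and the $L^p$ norm of $\Delta_k f$) is exactly the transposed, problematic version, so the discrepancy cannot be dismissed as a typo without consequences.
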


\begin{lemma}\label{riesz-product}
	Let $n\geq 2, 1\leq p<\infty$ and $s>0$.  Let $\mathcal{R}$ be a Riesz operator, the following estimates hold, for any functions $f,g\in L^\infty(\mr^n)\cap  B^s_{p,1}(\mr^n)$
	\begin{align}\label{product-1-riesz}
			\left\|f \mathcal{R} g\right\|_{B^s_{p,1}(\mr^n)}\leq C\left(\|f\|_{L^\infty(\mr^n)}\|g\|_{ B^s_{p,1}(\mr^n)}+\|g\|_{L^\infty(\mr^n)}\|f\|_{B^s_{p,1}(\mr^n)}\right),
			%\leq C\|f\|_{B^s_{p,1}}\|g\|_{B^s_{p,1}},
	\end{align}
and \begin{align}\label{product-2-infty}
		\left\|f g\right\|_{B^s_{p,1}(\mr^n)}\leq C\left(\|f\|_{L^\infty(\mr^n)}\|g\|_{ B^s_{p,1}(\mr^n)}+\|g\|_{L^p(\mr^n)}\|f\|_{B^s_{\infty,1}(\mr^n)}\right).
		%\leq C\|g\|_{ B^s_{p,1}}\|f\|_{B^s_{\infty,1}},
\end{align}

\end{lemma}
\begin{proof}
By  Bony's decomposition, we have
	\begin{align}\label{bony-besov}
		f\mathcal{R}g=T_f \mathcal{R}g+ T_{\mathcal{R}g}f + R(f,\mathcal{R}g).
	\end{align}
By Lemma \ref{B}, we obtain	
	\begin{equation}\label{low}\begin{split}
			\left\|T_f \mathcal{R} g\right\|_{ B^s_{p,1}}&\leq C\|f\|_{L^\infty}\|g\|_{B^{s}_{p,1}},\end{split}
	\end{equation}
and for $s>0,$
\begin{equation}\label{Reminder}
	\begin{split}
		\left\|R(f,\mathcal{R}g)\right\|_{B^s_{p,1}}&\leq C\|f\|_{B^0_{\infty,\infty}}\|\mathcal{R}g\|_{ B^s_{p,1}}\\&\leq C\|f\|_{L^\infty}\|g\|_{ B^s_{p,1}}
		.\end{split}
\end{equation}	

It remains to estimate 	 $T_{\mathcal{R}g}f$. Indeed,
	\begin{equation}\label{f-high-frequency}
		\begin{split}
			\left\|T_{\mathcal{R} g}f \right\|_{ B^s_{p,1}}&=\sum_{j\geq -1}2^{js}\left\| \Delta_{j}\left(\sum_{|k-j|\leq 5}S_{k-1}\mathcal{R}g\Delta_{k}f\right)\right\|_{L^p}\\
			&\leq \left\| \Delta_{-1}\left(\sum_{|k+1|\leq 5}S_{k-1}\mathcal{R}g\Delta_{k}f\right)\right\|_{L^p}+ \sum_{j\geq 0}2^{js}\left\| \Delta_{j}\left(\sum_{|k-j|\leq 5}S_{k-1}\mathcal{R}g\Delta_{k}f\right)\right\|_{L^p}\\ &\leq \left\| \Delta_{-1}\left(\sum_{|k+1|\leq 5}S_{k-1}\mathcal{R}g\Delta_{k}f\right)\right\|_{L^p}+\sum_{j\geq 0}2^{js}\left\| \Delta_{j}\mathcal{R}\left(\sum_{|k-j|\leq 5}S_{k-1}g\Delta_{k}f\right)\right\|_{L^p}\\
			&~~~~~+\sum_{j\geq 0}2^{js}\left\| \Delta_{j}\mathcal{R}\left(\sum_{|k-j|\leq 5}S_{k-1}g\Delta_{k}f\right)-\Delta_{j}\left(\sum_{|k-j|\leq 5}S_{k-1}\mathcal{R}g\Delta_{k}f\right)\right\|_{L^p}.\end{split}
	\end{equation}
	Applying Lemma \ref{riesz-commutator} yields
	\begin{align*}
		&\sum_{j\geq 0}2^{js}\left\| \Delta_{j}\mathcal{R}\left(\sum_{|k-j|\leq 5}S_{k-1}g\Delta_{k}f\right)-\Delta_{j}\left(\sum_{|k-j|\leq 5}S_{k-1}\mathcal{R}g\Delta_{k}f\right)\right\|_{L^p}\\
		&\leq  \sum_{j\geq 0}2^{js}\left\| \mathcal{R}\left(\sum_{|k-j|\leq 5}S_{k-1}g\Delta_{k}f\right)-\left(\sum_{|k-j|\leq 5}S_{k-1}\mathcal{R}g\Delta_{k}f\right)\right\|_{L^p}\\
		&\leq  C\sum_{j\geq 0}2^{js}\sum_{|k-j|\leq 5} \|S_{k-1}g\|_{L^\infty}\|\Delta_{k}f\|_{L^p}\\
		&\leq C\sum_{j\geq -1}2^{js} \|g\|_{L^\infty}\|\Delta_{j}f\|_{L^p}\leq C\|g\|_{L^\infty}\|f\|_{ B^s_{p,1}}.
	\end{align*}
	In addition, 
	\begin{equation*}\begin{split}
		\sum_{j\geq 0}2^{js}\left\| \Delta_{j}\mathcal{R}\left(\sum_{|k-j|\leq 5}S_{k-1}g\Delta_{k}f\right)\right\|_{L^p}
		&\leq C \sum_{j\geq -1}2^{js}\left\| S_{j-1}g\Delta_{j}f\right\|_{L^p}\\
		&\leq C \|g\|_{L^\infty}\sum_{j\geq -1}2^{js}\|\Delta_{j}f\|_{L^p}\\&\leq C\|g\|_{L^\infty}\|f\|_{ B^s_{p,1}}, 
\end{split}	\end{equation*} and
	\begin{equation*}
		\begin{split}
			\left\| \Delta_{-1}\left(\sum_{|k+1|\leq 5}S_{k-1}\mathcal{R}g\Delta_{k}f\right)\right\|_{L^p}
			\leq C\|g\|_{L^p}\|f\|_{L^\infty}.
		\end{split}
	\end{equation*}
	Combining these estimates with \eqref{f-high-frequency} gives to
	\begin{align}\label{high}
		\left\|T_{\mathcal{R} g}f \right\|_{ B^s_{p,1}}\leq C\left(\|g\|_{ B^s_{p,1}}\|f\|_{L^\infty}+\|g\|_{L^\infty}\|f\|_{ B^s_{p,1}}\right).
	\end{align}
	Collecting 	 \eqref{low},  \eqref{Reminder} and \eqref{high} with \eqref{bony-besov} yields  \eqref{product-1-riesz}.
	
To establish   estimate \eqref{product-2-infty}, we  adopt to  Bony's paraproduct decomposition again
	\begin{equation*}
		fg=T_f g + T_g f + R(f,g).
	\end{equation*}
The bounds for the paraproduct term  $T_f g$ and the remainder term $R(f,g)$ follow directly from Lemma \ref{B}, which yields
	\begin{equation*}
		\|T_f g\|_{B^s_{p,1}} \leq C\|f\|_{L^\infty}\|g\|_{B^s_{p,1}} \quad \text{and} \quad \|R(f,g)\|_{B^s_{p,1}} \leq C\|f\|_{L^\infty}\|g\|_{B^s_{p,1}}.
	\end{equation*}
The essential part of the proof lies in estimating the remaining paraproduct term $T_g f$. This term can be estimated as follows:
	\begin{equation*}
		\begin{split}
			\left\|T_{  g}f \right\|_{ B^s_{p,1}}&=\sum_{j\geq -1}2^{js}\left\| \Delta_{j}\left(\sum_{|k-j|\leq 5}S_{k-1} g\Delta_{k}f\right)\right\|_{L^p}\\
			&\leq C \sum_{j\geq -1}2^{js}\left\| S_{j-1}g\Delta_{j}f\right\|_{L^p}\\
			&\leq C \|g\|_{L^p}\sum_{j\geq -1}2^{js}\|\Delta_{j}f\|_{L^\infty}\leq C\|g\|_{L^p}\|f\|_{B^s_{\infty,1}}. 
		\end{split}
	\end{equation*}
Therefore, the proof of \eqref{product-2-infty} is finished by collecting the above estimates.
%	Therefore, by combining Lemma \ref{lem:besov_properties} with the standard embedding results for Besov spaces, we conclude that both product estimates are valid, thereby completing the proof of the Lemma.
\end{proof}

\subsection{Technical lemmas of composite functions}\label{sec:2.2}

\begin{lemma}[\cite{bahouriFourierAnalysisNonlinear2011, miao2019-littlewood}]\label{lagrange-flowmap}Let $\Phi:\mr^n\times[0,\infty)\rightarrow\mr^n$
	be the flow map generated by a velocity field 
$u$, defined as the solution to
	\begin{align}\label{2.4}
		\begin{cases}
			\partial_t\Phi({\bf x},t)=u(\Phi({\bf x},t),t),\\
			\Phi({\bf x},0)={\bf x},
		\end{cases}
	\end{align}which admits the integral formulation
	\begin{align}\label{2.4-integral}
		\Phi({\bf x},t)={\bf x}+\int_{0}^t u(\Phi({\bf x},s),s)\dd s.
	\end{align}
Then, the flow map and its inverse $\Phi^{-}$	satisfy the following estimates for any $t>0,$
	\begin{align*}
		\|\Phi^{\pm}\|_{L^\infty\left(0,t; Lip(\mr^n)\right)}\leq \|\nabla \Phi^{\pm}\|_{L^\infty\left(0,t; L^\infty(\mr^n)\right)}\leq \exp\left(\int_{0}^t\|\nabla u(s)\|_{L^\infty(\mr^n)}\dd s\right),\\
		\|\Phi^{\pm}-I d\|_{L^\infty\left(0,t; Lip(\mr^n)\right)}\leq 	\|\Phi^{\pm}\|_{L^\infty\left(0,t; Lip(\mr^n)\right)} \int_{0}^t\|u(s)\|_{Lip(\mr^n)}\dd s,
	\end{align*}
	where $Id$ is the identity matrix and $Lip$ denotes the space of Lipschitz functions with norm
	\begin{align*}
		\|\Phi(\cdot)\|_{Lip(\mr^n)}\define \sup_{{\bf x}\neq {\bf y}}\dfrac{|\Phi({\bf x})-\Phi({\bf y})|}{|{\bf x}-{\bf y}|}.
	\end{align*}
\end{lemma}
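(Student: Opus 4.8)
\textbf{Proof plan for Lemma~\ref{lagrange-flowmap}.}

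The plan is to establish each of the three displayed bounds in turn, starting from the integral representation \eqref{2.4-integral} and its analogue for $\Phi^{-}$. First I would observe that differentiating \eqref{2.4-integral} in the spatial variable gives
\begin{align*}
	\nabla\Phi({\bf x},t) = I + \int_0^t \nabla u(\Phi({\bf x},s),s)\,\nabla\Phi({\bf x},s)\dd s,
\end{align*}
so that $|\nabla\Phi({\bf x},t)| \leq 1 + \int_0^t \|\nabla u(s)\|_{L^\infty}|\nabla\Phi({\bf x},s)|\dd s$ pointwise in ${\bf x}$. Taking the supremum over ${\bf x}$ and applying Gronwall's inequality yields $\|\nabla\Phi\|_{L^\infty(0,t;L^\infty)} \leq \exp\bigl(\int_0^t\|\nabla u(s)\|_{L^\infty}\dd s\bigr)$. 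The first inequality $\|\Phi\|_{L^\infty(0,t;\mathrm{Lip})}\leq\|\nabla\Phi\|_{L^\infty(0,t;L^\infty)}$ is immediate from the mean value theorem, since the Lipschitz seminorm of a $C^1$ map is controlled by the sup-norm of its Jacobian. For the inverse flow $\Phi^{-}$, I would use that $\Phi^{-}$ solves the same ODE backward in time (equivalently, $\partial_s \Phi^{-}({\bf x},s) = -u(\Phi^{-}({\bf x},s),s)$ along the appropriate time reversal), so the identical Gronwall argument applies and produces the same bound; alternatively one differentiates the identity $\Phi(\Phi^{-}({\bf x},t),t)={\bf x}$ and inverts the Jacobian, noting $\det\nabla\Phi = 1$ because $\nabla\cdot u = 0$, which keeps the inverse Jacobian's norm controlled by the same exponential.

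For the second displayed inequality, I would start from $\Phi^{\pm}({\bf x},t) - {\bf x} = \pm\int_0^t u(\Phi^{\pm}({\bf x},s),s)\dd s$ (with the sign depending on forward versus backward flow) and differentiate in ${\bf x}$ to get $\nabla\Phi^{\pm}({\bf x},t) - I = \pm\int_0^t \nabla u(\Phi^{\pm}({\bf x},s),s)\,\nabla\Phi^{\pm}({\bf x},s)\dd s$. Bounding the right side in $L^\infty$ gives
\begin{align*}
	\|\nabla\Phi^{\pm} - I\|_{L^\infty(0,t;L^\infty)} \leq \int_0^t \|\nabla u(s)\|_{L^\infty}\,\|\nabla\Phi^{\pm}(s)\|_{L^\infty}\dd s \leq \|\Phi^{\pm}\|_{L^\infty(0,t;\mathrm{Lip})}\int_0^t \|u(s)\|_{\mathrm{Lip}}\dd s,
\end{align*}
where in the last step I pull the sup over $s$ of $\|\nabla\Phi^{\pm}(s)\|_{L^\infty}$ out of the integral and use $\|\nabla u(s)\|_{L^\infty} = \|u(s)\|_{\mathrm{Lip}}$. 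Since $\|\Phi^{\pm} - \mathrm{Id}\|_{L^\infty(0,t;\mathrm{Lip})}$ equals $\|\nabla\Phi^{\pm} - I\|_{L^\infty(0,t;L^\infty)}$ (again by the mean value theorem applied to $\Phi^{\pm}({\bf x}) - {\bf x}$), this is exactly the claimed bound.

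None of this is genuinely hard; the only point requiring a little care is the treatment of the inverse flow $\Phi^{-}$, where one must either set up the backward ODE correctly or invoke incompressibility ($\nabla\cdot u = 0 \Rightarrow \det\nabla\Phi \equiv 1$) to control $\nabla\Phi^{-} = (\nabla\Phi)^{-1}\circ\Phi^{-}$ by the cofactor formula without picking up extra constants. Everything else is a routine Gronwall estimate on the variational equation for the flow map, and the Lipschitz-seminorm identities are standard. Since this lemma is quoted verbatim from \cite{bahouriFourierAnalysisNonlinear2011, miao2019-littlewood}, I would in practice simply cite it, but the argument above is the complete self-contained proof.
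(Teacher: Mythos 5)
The paper gives no proof of this lemma at all; it is quoted verbatim from the cited references, so there is no in-paper argument to compare against. Your proof is the standard one and is essentially correct: Gronwall applied to the variational equation $\nabla\Phi(t)=I+\int_0^t\nabla u(\Phi(s),s)\nabla\Phi(s)\,\mathrm{d}s$ gives the exponential bound, the mean value theorem converts the Jacobian bound into the Lipschitz seminorm bound, and the integral identity for $\Phi^{\pm}-\mathrm{Id}$ yields the second display. Two small points deserve care. First, $\Phi^{-1}(\cdot,t)$ for varying $t$ is not the flow of a single autonomous ODE; the clean way is to introduce the two-parameter flow $\Psi_{t,s}$ (position at time $s$ of the particle located at ${\bf y}$ at time $t$), which solves $\partial_s\Psi_{t,s}=u(\Psi_{t,s},s)$ with $\Psi_{t,t}=\mathrm{Id}$ and satisfies $\Phi^{-1}(\cdot,t)=\Psi_{t,0}$; the identical Gronwall estimate on $[s,t]$ then gives the same exponential, and $\Psi_{t,0}({\bf y})={\bf y}-\int_0^t u(\Psi_{t,s}({\bf y}),s)\,\mathrm{d}s$ gives the second bound for $\Phi^{-}$. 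Second, your alternative route via $\nabla\Phi^{-1}=(\nabla\Phi)^{-1}\circ\Phi^{-1}$ with $\det\nabla\Phi\equiv1$ recovers the stated constant-free exponential only in dimension $n=2$, where the adjugate of a $2\times2$ matrix has the same entries up to sign; for general $n$ the adjugate consists of $(n-1)\times(n-1)$ minors and would give $\exp\bigl((n-1)\int_0^t\|\nabla u\|_{L^\infty}\,\mathrm{d}s\bigr)$, so the backward-flow argument is the one to retain. With that understood, your proof is complete and matches the standard argument in the references the paper cites.
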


\begin{lemma}[\cite{bahouriFourierAnalysisNonlinear2011, miao2019-littlewood}]\label{phi-composite}
	Let $n\geq 2, 1\leq p\leq \infty$ and $0<\alpha<1.$ Suppose $\Phi$ is a bi-Lipschitz measure-preserving  mapping from $\mr^n$ to $\mr^n$.  Then there exist  constants $C=C(n)>0$
 such that for any function $u$, the following estimates hold
	\begin{align*}
		\|u\circ \Phi\|_{\dot{B}^\alpha_{p,1}(\mr^n)}\leq C \|\nabla \Phi\|_{L^\infty(\mr^n)}^\alpha \|u\|_{\dot{B}^\alpha_{p,1}(\mr^n)}
	\end{align*}
	and
	\begin{align*}
		\|u\circ \Phi\|_{B^0_{\infty,1}(\mr^n)}\leq C\left(1+\log\left(\|\Phi\|_{Lip(\mr^n)}\|\Phi^{-1}\|_{Lip(\mr^n)}\right)\right)\|u\|_{B^0_{\infty,1}(\mr^n)}.
	\end{align*}

\end{lemma}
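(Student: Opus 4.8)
The plan is to Littlewood--Paley decompose $u=\sum_k\dot{\Delta}_k u$, compose each block with $\Phi$, and reassemble, exploiting two elementary facts about a single piece $v_k:=(\dot{\Delta}_k u)\circ\Phi$. First, since $\Phi$ is measure preserving, a change of variables gives $\|v_k\|_{L^p}=\|\dot{\Delta}_k u\|_{L^p}=:d_k$ for every $p\in[1,\infty]$. Second, since $\Phi$ is Lipschitz and $\dot{\Delta}_k u$ is smooth, the chain rule $\nabla v_k=(D\Phi)^{T}((\nabla\dot{\Delta}_k u)\circ\Phi)$ together with measure preservation and Bernstein's inequality (Lemma~\ref{berstein}) yields $\|\nabla v_k\|_{L^p}\le\|\nabla\Phi\|_{L^\infty}\|\nabla\dot{\Delta}_k u\|_{L^p}\le C2^{k}\|\nabla\Phi\|_{L^\infty}d_k$. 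Thus each $v_k$ is controlled in $L^p\cap\dot{W}^{1,p}$ with constants that behave like those of a function frequency-localized at scale $2^{k}\|\nabla\Phi\|_{L^\infty}$, and it remains to sum $\|u\circ\Phi\|_{\dot{B}^{\alpha}_{p,1}}\le\sum_k\|v_k\|_{\dot{B}^{\alpha}_{p,1}}$ quantitatively. (One preliminary point: one checks $u\circ\Phi\in\mathcal{S}'_h(\mr^n)$, so the homogeneous decomposition is legitimate.)

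For the first inequality, I would estimate $\|\dot{\Delta}_j v_k\|_{L^p}$ in two ways: by $Cd_k$ ($L^p$-boundedness of $\dot{\Delta}_j$), and by $C2^{-j}\|\nabla v_k\|_{L^p}\le C2^{k-j}\|\nabla\Phi\|_{L^\infty}d_k$ (the lower Bernstein inequality applied to the annulus-supported $\dot{\Delta}_j v_k$, plus the gradient bound above). Writing $L:=\|\nabla\Phi\|_{L^\infty}$, which may be taken $\ge1$ since $|\det D\Phi|\equiv1$, I would split $\sum_j 2^{j\alpha}\|\dot{\Delta}_j v_k\|_{L^p}$ at the level $j_0\approx k+\log_2 L$, using the first bound for $j\le j_0$ and the second for $j>j_0$; because $0<\alpha<1$, both geometric tails sum to $C(2^{k}L)^{\alpha}d_k=CL^{\alpha}2^{k\alpha}d_k$. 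Summing over $k$ gives $\|u\circ\Phi\|_{\dot{B}^{\alpha}_{p,1}}\le CL^{\alpha}\sum_k 2^{k\alpha}d_k=C\|\nabla\Phi\|_{L^\infty}^{\alpha}\|u\|_{\dot{B}^{\alpha}_{p,1}}$.

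For the second inequality one is at the endpoint $s=0$, $p=\infty$, $r=1$, where a logarithmic loss is forced: the gradient bound alone would produce a constant growing with the ``frequency support'' of $u$. The new ingredient I would add is a \emph{low-frequency} bound on $\Delta_j v_k$ (now with nonhomogeneous blocks) that decays as $j\to-\infty$. Writing $\Delta_j v_k$ as a double convolution and using that $\widehat{\Delta_k u}$ vanishes near the origin for $k\ge0$ (vanishing mean of $\Delta_k u$), one can insert a difference $h_j(x-\Phi^{-1}(z))-h_j(x-\Phi^{-1}(w))$, estimate it by $\|\nabla h_j\|_{L^\infty}\|\Phi^{-1}\|_{Lip}|z-w|$, and track scales to obtain $\|\Delta_j v_k\|_{L^\infty}\le C\|\Phi\|_{Lip}^{n}\|\Phi^{-1}\|_{Lip}\,2^{j-k}d_k$ for $j<k$, with $C=C(n)$. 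Together with $\|\Delta_j v_k\|_{L^\infty}\le Cd_k$ and $\|\Delta_j v_k\|_{L^\infty}\le C\|\Phi\|_{Lip}2^{k-j}d_k$, the sequence $j\mapsto\|\Delta_j v_k\|_{L^\infty}$ is $\le Cd_k$ only on an interval of length $O\!\big(\log(\|\Phi\|_{Lip}\|\Phi^{-1}\|_{Lip})\big)$ around $j=k$ and decays geometrically off it; summing over $j$ and then over $k$ yields $\|u\circ\Phi\|_{B^0_{\infty,1}}\le C\big(1+\log(\|\Phi\|_{Lip}\|\Phi^{-1}\|_{Lip})\big)\|u\|_{B^0_{\infty,1}}$. (The block $k=-1$ is handled separately and contributes the same type of factor. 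Alternatively, one can feed the estimates $\|u\circ\Phi\|_{\dot{B}^{\nu}_{\infty,\infty}}\le C\|\Phi\|_{Lip}^{\nu}\|u\|_{\dot{B}^{\nu}_{\infty,\infty}}$, $\|u\circ\Phi\|_{\dot{B}^{-\nu}_{\infty,\infty}}\le C\|\Phi^{-1}\|_{Lip}^{\nu}\|u\|_{\dot{B}^{-\nu}_{\infty,\infty}}$ — the first being H\"older composition, the second its dual, obtained from the first inequality applied to $\Phi^{-1}$ with $p=1$ — into the logarithmic interpolation inequality of Lemma~\ref{besov-log-lemma} with $r_1=1$, $r_2=r_3=\infty$, then pass to the nonhomogeneous scale via Lemma~\ref{besov+nonhomo+homo}.)

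The main obstacle is precisely the low-frequency cancellation estimate in the last paragraph: one must extract genuine geometric decay in $j-k$ while keeping the constant only \emph{polynomial} in $\|\Phi\|_{Lip}$ and $\|\Phi^{-1}\|_{Lip}$ (so that its logarithm is $O(\log(\|\Phi\|_{Lip}\|\Phi^{-1}\|_{Lip}))$), which requires handling the Schwartz tails of the Littlewood--Paley kernels honestly rather than treating them as compactly supported, and using the bi-Lipschitz property of $\Phi$ to control the effective support of $x\mapsto h_j(x-\Phi^{-1}(\cdot))$. The rest is bookkeeping with Bernstein's inequality and summation of geometric series.
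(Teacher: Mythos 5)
The paper does not prove this lemma: it is imported verbatim from the cited references (Bahouri--Chemin--Danchin and Miao--Wu--Zhang), so there is no in-paper argument to compare against. Your proposal reconstructs the standard proof from those sources, and it is essentially correct. The first inequality is fully argued: the two bounds $\|\dot{\Delta}_j v_k\|_{L^p}\le C\min\{1,\,2^{k-j}\|\nabla\Phi\|_{L^\infty}\}\,d_k$ (the second via the annulus-localization of $\dot{\Delta}_j$, i.e. writing $\varphi(2^{-j}\xi)=2^{-j}\sum_i(i\xi_i)\tilde\psi_i(2^{-j}\xi)$, not via Bernstein applied to $v_k$ itself, which is not frequency-localized) and the split at $j_0\approx k+\log_2\|\nabla\Phi\|_{L^\infty}$ give exactly $C\|\nabla\Phi\|_{L^\infty}^\alpha 2^{k\alpha}d_k$, with $0<\alpha<1$ needed to sum both tails.

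For the second inequality the architecture is right, and you have correctly identified that the whole content is the low-frequency bound for $j<k$. Your sketch of that bound is the one loose point: pairing the oscillation of $\Delta_k u$ against a difference of kernels estimated by $\|\nabla h_j\|_{L^\infty}$ loses powers of $2^j$ and forces you to fight the Schwartz tails. The clean route is to put the derivative on the \emph{kernel} rather than difference it: write $\Delta_k u=2^{-k}\sum_i\partial_i g_k^i$ with $\|g_k^i\|_{L^\infty}\le C\|\Delta_k u\|_{L^\infty}$ (possible since $\widehat{\Delta_k u}$ is supported in an annulus of radius $\sim 2^k$ for $k\ge 0$), change variables $y=\Phi^{-1}(z)$ using measure preservation, and integrate by parts in $z$; this produces $\nabla h_j$ composed with $\Phi^{-1}$, whose $L^1$ norm after changing variables back is exactly $\|\nabla h_j\|_{L^1}\sim 2^j$, yielding $\|\Delta_j((\Delta_k u)\circ\Phi)\|_{L^\infty}\le C\|\Phi^{-1}\|_{Lip}\,2^{j-k}\|\Delta_k u\|_{L^\infty}$ with no extraneous polynomial factors. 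Combined with your bounds $Cd_k$ and $C\|\Phi\|_{Lip}2^{k-j}d_k$, the window where the minimum is $\approx d_k$ has length $O(\log(\|\Phi\|_{Lip}\|\Phi^{-1}\|_{Lip}))$ and the claim follows. Your parenthetical alternative (feeding the H\"older composition bound and its dual into Lemma~\ref{besov-log-lemma}) also works in principle but requires care with the ratio inside the logarithm; the direct kernel estimate is the safer path.
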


\begin{lemma}[\cite{elgindi2020infty}]\label{commutator-elgindi}
	Let $n\geq 2, 0<\alpha<1, 1<p<\infty$ and $\Phi$ be a bi-Lipschitz  measure-preserving mapping from $\mr^n$ to $\mr^n$. Define the  commutator $\left[\mathcal{R},\Phi\right]$ as 
	\begin{align*}
		\left[\mathcal{R},\Phi\right]f=\left(\mathcal{R}(f)\circ\Phi\right)-\mathcal{R}\left(f\circ\Phi\right),
	\end{align*}
	where $\mathcal{R}$ is  a Calder\'{o}n-Zygmund singular integral operator with  a kernel $K(x)=\frac{\Omega(\frac{x}{|x|})}{|x|^n}$
for some $\Omega$ which is mean-zero on the unit-sphere.
Then $$\left[\mathcal{R},\Phi\right] : B^\alpha_{p,1}\rightarrow B^\alpha_{p,1}$$ is bounded. Furthermore, there exists a universal constant $C>0$ depending only upon $\mathcal{R}$, $\left\| \Phi\right\|_{Lip}$, $\left\| \Phi^{-1}\right\|_{Lip}$ and  $n$ such that
	\begin{align*}
		\|[\mathcal{R}, \Phi] f\|_{B_{p, 1}^\alpha(\mr^n)} \leq C \max \left\{\|\Phi-I d\|_{Lip(\mr^n)},\left\|\Phi^{-1}-I d\right\|_{Lip(\mr^n)}\right\}\|f\|_{B_{p, 1}^\alpha(\mr^n)}.
	\end{align*}

\end{lemma}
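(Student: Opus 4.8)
\emph{Proof plan.} The plan is to realise $[\mathcal R,\Phi]$ as a non-convolution Calder\'on--Zygmund operator whose kernel gains an extra factor comparable to $\delta\define\max\{\|\Phi-\mathrm{Id}\|_{\mathrm{Lip}},\|\Phi^{-1}-\mathrm{Id}\|_{\mathrm{Lip}}\}$, and then to combine that gain with the $C^1$ regularity of the kernel. First I would use that $\Phi$ is measure preserving, so $|\det D\Phi|\equiv1$ a.e., and substitute $w=\Phi(y)$ in the singular integral defining $\mathcal R$ to get
\begin{align*}
	[\mathcal R,\Phi]f(x)=\mathrm{p.v.}\int_{\mr^n}\big(K(\Phi(x)-\Phi(y))-K(x-y)\big)(f\circ\Phi)(y)\dd y=:T(f\circ\Phi)(x),
\end{align*}
where $K$ is the (matrix of) Riesz kernel(s), homogeneous of degree $-n$ and smooth off the origin with $|\nabla^kK(z)|\le C_{k,n}|z|^{-n-k}$. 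By Lemma \ref{phi-composite} and the fact that composition by $\Phi$ preserves $L^p$ (together with $\|\cdot\|_{B^\alpha_{p,1}}\sim\|\cdot\|_{L^p}+\|\cdot\|_{\dot B^\alpha_{p,1}}$ for $\alpha>0$), one has $\|f\circ\Phi\|_{B^\alpha_{p,1}}\le C(\|\Phi\|_{\mathrm{Lip}})\|f\|_{B^\alpha_{p,1}}$, so it is enough to show that the operator $T$ with kernel $\widetilde K(x,y)\define K(\Phi(x)-\Phi(y))-K(x-y)$ is bounded on $B^\alpha_{p,1}$ with norm $\le C\delta$. I may also assume $\delta<\tfrac12$, for when $\delta\ge\tfrac12$ the bound $\le C\delta$ is immediate from the boundedness on $B^\alpha_{p,1}$ of the zero-order multiplier $\mathcal R$ and of composition by $\Phi$.

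Next I would record the kernel estimates. Writing $\psi\define\Phi-\mathrm{Id}$ (so $\|\psi\|_{\mathrm{Lip}}\le\delta<\tfrac12$) and $c\define\max\{\|\Phi\|_{\mathrm{Lip}},\|\Phi^{-1}\|_{\mathrm{Lip}}\}$ (so $c^{-1}|x-y|\le|\Phi(x)-\Phi(y)|\le c|x-y|$), the bound $|\psi(x)-\psi(y)|\le\delta|x-y|<\tfrac12|x-y|$ keeps the segment from $x-y$ to $\Phi(x)-\Phi(y)$ inside $\{\tfrac12|x-y|\le|z|\le\tfrac32|x-y|\}$, and the fundamental theorem of calculus gives
\begin{align*}
	\widetilde K(x,y)=\int_0^1\nabla K\big(x-y+t(\psi(x)-\psi(y))\big)\cdot\big(\psi(x)-\psi(y)\big)\dd t,
\end{align*}
whence $|\widetilde K(x,y)|\le C\delta\,|x-y|^{-n}$; differentiating once more in $x$ or $y$ and using $|\nabla^2K(z)|\le C_n|z|^{-n-2}$ and $|\psi(x)-\psi(y)|\le\delta|x-y|$ yields the Lipschitz-type regularity bounds $|\widetilde K(x,y)-\widetilde K(x',y)|+|\widetilde K(y,x)-\widetilde K(y,x')|\le C\delta\,|x-x'|\,|x-y|^{-n-1}$ for $|x-x'|\le\tfrac1{2c}|x-y|$. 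So $\widetilde K$ is a standard CZ kernel of regularity $1$ with CZ constant $\le C\delta$.

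The hard part will be the $L^p$ bound $\|T\|_{L^p\to L^p}\le C\delta$ for $1<p<\infty$, since the trivial estimate $\|T\|_{L^2\to L^2}\le2\|\mathcal R\|_{L^2\to L^2}$ is only $O(1)$. To produce the factor $\delta$ I would Taylor-expand, using $|\psi(x)-\psi(y)|<|x-y|$,
\begin{align*}
	\widetilde K(x,y)=\sum_{m\ge1}\frac1{m!}\,\nabla^mK(x-y):\big(\psi(x)-\psi(y)\big)^{\otimes m},
\end{align*}
so that the $m$-th term is a finite linear combination of $m$-th order Calder\'on commutators formed from the Lipschitz components of $\psi$ and the homogeneous CZ kernels $\partial^\beta K$, $|\beta|=m$; by the Coifman--McIntosh--Meyer commutator theorem each is bounded on $L^2$, hence on $L^p$, with norm $\le C^m\|\nabla\psi\|_{L^\infty}^m\le(C\delta)^m$, so for $\delta$ small the series sums to $\|T\|_{L^p\to L^p}\le C'\delta$. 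Then I would note $T(1)=\mathcal R(1)\circ\Phi-\mathcal R(1)=0$ (as $\mathcal R$ annihilates constants), and invoke the standard almost-orthogonality/Littlewood--Paley machinery: an $L^p$-bounded CZ operator with Lipschitz kernel and $T(1)=0$ satisfies $\|\Delta_jT\Delta_k\|_{L^p\to L^p}\lesssim\delta\,2^{-|j-k|}$, and summing against the $B^\alpha_{p,1}$ norm converges precisely because $\sum_{\ell\in\mathbb{Z}}2^{\ell\alpha-|\ell|}<\infty$ for $0<\alpha<1$, giving $\|T\|_{B^\alpha_{p,1}\to B^\alpha_{p,1}}\le C\delta$. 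Combined with the composition bound for $f\circ\Phi$, this yields $\|[\mathcal R,\Phi]f\|_{B^\alpha_{p,1}}\le C\delta\,\|f\|_{B^\alpha_{p,1}}$ with $C=C(n,\|\Phi\|_{\mathrm{Lip}},\|\Phi^{-1}\|_{\mathrm{Lip}})$.

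The main obstacle, as indicated, is harvesting the small factor $\delta$ in the $L^p$ estimate: one cannot interpolate between $L^p$ and $W^{1,p}$, because differentiating $\widetilde K$ in $x$ without re-integrating by parts leaves a kernel of order $-n-1$ and $T$ does not map $W^{1,p}$ to itself, so the gain has to be extracted at the kernel level via the Calder\'on-commutator structure and the Coifman--McIntosh--Meyer estimates. The same feature fixes the range at $0<\alpha<1$: the Riesz kernel makes $\widetilde K$ only Lipschitz, so the best Littlewood--Paley off-diagonal decay is $2^{-|j-k|}$, summable against $2^{\ell\alpha}$ exactly when $|\alpha|<1$.
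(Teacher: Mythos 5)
The paper does not actually prove this lemma: it is imported verbatim from the cited reference \cite{elgindi2020infty}, so there is no internal proof to compare against, and your proposal has to be measured against the argument of Elgindi--Masmoudi. Your overall strategy is sound and reaches the right conclusion, but it is a genuinely different and much heavier route. Both proofs start the same way: use measure preservation to write $[\mathcal R,\Phi]f=T(f\circ\Phi)$ with kernel $\widetilde K(x,y)=K(\Phi(x)-\Phi(y))-K(x-y)$, and both extract the factor $\delta=\max\{\|\Phi-Id\|_{Lip},\|\Phi^{-1}-Id\|_{Lip}\}$ from the first-order kernel bounds $|\widetilde K|\lesssim\delta|x-y|^{-n}$, $|\nabla_{x,y}\widetilde K|\lesssim\delta|x-y|^{-n-1}$. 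The divergence is in how the operator bound is then obtained: the cited source works directly and self-containedly on Littlewood--Paley blocks, splitting the kernel integral at the scale of the block and using the mean-zero property of $K$ on annuli together with the near-identity of $\Phi$, whereas you reduce the $L^p$ step to the full Coifman--McIntosh--Meyer theory of higher Calder\'on commutators and the Besov step to a Lemari\'e/$T(1)$-type criterion. What your route buys is a clean reduction to named theorems and a transparent explanation of why $0<\alpha<1$ is forced (kernel regularity one gives off-diagonal decay $2^{-|j-k|}$, summable against $2^{\ell\alpha}$ exactly for $|\alpha|<1$ --- this diagnosis is correct and matches the source); what it costs is that you are invoking machinery far deeper than the lemma itself.

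Three points in your write-up need more care before it would count as a complete proof. First, the Taylor expansion of $K(z+h)$ converges only for $|h|\le c_0(n)|z|$ with $c_0(n)$ possibly well below $\tfrac12$ (the complexified Riesz kernel is singular off the real cone), and the summability of $\sum_m\|T_m\|$ additionally requires that the Calder\'on-commutator norms grow at most exponentially in $m$ after accounting for the Cauchy estimates $\tfrac1{m!}|\nabla^mK(z)|\le A^m|z|^{-n-m}$; so your reduction must be to $\delta\le\delta_0(n)$ for a possibly small dimensional constant, not merely $\delta<\tfrac12$ (the trivial bound still disposes of $\delta\ge\delta_0$). Second, the principal values must be matched: after the change of variables the truncation in the first term is $|\Phi(x)-\Phi(y)|>\varepsilon$ rather than $|x-y|>\varepsilon$, and identifying the two limits uses the oddness of $K$ and the bi-Lipschitz control; this is also where $T^*(1)=0$ comes from (the kernel $\widetilde K$ is antisymmetric), which a lighter alternative to CMM --- the quantitative $T(1)$ theorem with kernel constants, WBP constant and $\|T1\|_{BMO},\|T^*1\|_{BMO}$ all $O(\delta)$ --- would exploit to avoid the infinite series altogether. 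Third, the off-diagonal estimate $\|\Delta_jT\Delta_k\|_{L^p\to L^p}\lesssim\delta\,2^{-|j-k|}$ is asserted rather than proved; the direction $j\gg k$ needs $T(1)=0$ plus the $x$-regularity of $\widetilde K$, while $j\ll k$ needs $\Delta_kg=2^{-k}\mathrm{div}(G_k)$ plus the $y$-regularity, and with regularity exactly one a logarithmic loss $2^{-|j-k|}(1+|j-k|)$ can appear --- harmless for $0<\alpha<1$ but worth stating.
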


\section{Construction of special functions}\label{sec:3}

Building upon the seminal work of Elgindi and Masmoudi \cite{elgindi2020infty}, who established a framework for characterizing the discontinuity of Riesz operators in 
$L^\infty$
via logarithmic singularities, our work extends this analysis to general steady-state solutions where $g'(x_2)$
is not identically zero. Unlike the Riesz operator approach in \cite{elgindi2020infty}, our setting necessitates a fundamentally different analytical framework due to the non-trivial structure of the steady states. A key technical challenge lies in deriving sharp lower bound estimates that incorporate a carefully constructed non-vanishing weight function. This weight function must simultaneously capture the underlying steady-state geometry and preserve the delicate balance required for logarithmic singularity estimates. 
 
%In the following proposition, we  use $c$ (or $c$ with a subindex) to denote various constants whose values may be different from line to line. %Occasionally, we use C with a subindex to mark some crucial constants. 
 
\begin{prop}\label{fN-sequence}
	Let $2< p<\infty$.
	There exist a sequence of $C^\infty_c(\mr^2)$ functions $\{f_N\}_{N=1}^{\infty}$ and constants $c_s'>0$ (independent of $N$) such that the following estimates hold
	\begin{align}\label{2.1}
	\| f_N(\cdot)\|_{W^{1,\infty}(\mr^2)}&\leq c,\\\label{2.0}
	\|f_N(\cdot)\|_{B^{\frac{2}{p}}_{p,1}(\mr^2)}&\leq c\left(\log N\right)^{\frac{1}{2}},\\\label{2.2}
	\|f_N(\cdot)\|_{B^{\frac{2}{p}+1}_{p,1}(\mr^2)}&\leq cN.
\end{align}

Furthermore,  if 
$h({\bf x})$ is a continuous function that is not identically zero,
then there exists a constant $c_1>0,$ independent of $N,$ such that
\begin{align}\label{2.3.1}
	\|h(\cdot) \mathcal{R}^2_{1}\nabla^\perp f_N(\cdot)\|_{L^\infty(\mr^2)}&\geq c_1N,
\end{align}
where $\mathcal{R}_{1}=\partial_{x_1} \Lambda^{-\frac{1}{2}}$ is the first Riesz transform.
\end{prop}

\begin{proof}
Let $P(x_1,x_2)=x_1^3x_2-x_1x_2^3$ be a  harmonic polynomial of degree 4 (i.e., $\Delta P=0$).
%    \begin{align*}
%    	P(x_1,x_2)=x_1^3x_2-x_1x_2^3, ~\Delta P(x_1,x_2)=0.
%    \end{align*}
Define
\begin{align*}
	G_N(x_1,x_2)=P(x_1,x_2)\log(x_1^2+x_2^2+\frac{1}{2^N}).
\end{align*}
A direct calculation shows that
\begin{align*}
	\Delta \log(x_1^2+x_2^2+\frac{1}{2^N})&=\dfrac{\frac{4}{2^N}}{\left(x_1^2+x_2^2+\frac{1}{2^N}\right)^2},
\end{align*}and consequently,
\begin{align}\nonumber
	&\Delta G_N(x_1,x_2)\\\label{delta-G}
	&=\dfrac{16x_1^3x_2-16x_1x_2^3}{x_1^2+x_2^2+\frac{1}{2^N}}+\dfrac{1}{2^{N-2}}\dfrac{x_1^3x_2-x_1x_2^3}{\left(x_1^2+x_2^2+\frac{1}{2^N}\right)^2}.
\end{align}
{\bf Derivative Analysis and Scaling Behavior:}
Taking the first-order partial derivative with respect to $x_1$ on
\eqref{delta-G} yields
\begin{align*}
	\partial_{1}\Delta G_N(x_1,x_2)&=8\dfrac{3x_1^2x_2-x_2^3}{x_1^2+x_2^2+\frac{1}{2^N}}-16\dfrac{x_1^4x_2-x_1^2x_2^3}{\left(x_1^2+x_2^2+\frac{1}{2^N}\right)^2}\\
	&~~~+4\dfrac{\dfrac{1}{2^{N}}(3x_1^2x_2-x_2^3)}{\left(x_1^2+x_2^2+\frac{1}{2^N}\right)^2}-16\dfrac{\dfrac{1}{2^{N}}\left(x_1^4x_2-x_1^2x_2^3\right)}{\left(x_1^2+x_2^2+\frac{1}{2^N}\right)^3}.
\end{align*}By performing scaling analysis  for the terms appearing on the right-hand side, we derive 
\begin{equation}\label{fisrt}
|\partial_{1}\Delta G_N(x_1,x_2)|\leq C \left(x_1^2+x_2^2+\frac{1}{2^N}\right)^\frac{1}{2}.
\end{equation}
To verify this, we apply Young's inequality to bound the numerator terms as follows:
$$3x_1^2x_2-x_2^3\leq C(x_1^2+x_2^2+\dfrac{1}{2^{N}})^\frac{3}{2},$$
$$x_1^4x_2-x_1^2x_2\leq  C(x_1^2+x_2^2+\dfrac{1}{2^{N}})^\frac{5}{2},$$
\begin{equation*}
	\begin{split}
		\dfrac{1}{2^{N}}(3x_1^2x_2-x_2^3)&\leq C\left((\dfrac{1}{2^{N}})^\frac{5}{2}+ x_1^{\frac{10}{3}}x_2^{\frac{5}{3}}+x_2^5\right)\\&\leq C(x_1^2+x_2^2+\dfrac{1}{2^{N}})^\frac{5}{2},
\end{split}\end{equation*}
\begin{equation*}
	\begin{split}\dfrac{1}{2^{N}}(x_1^4x_2-x_1^2x_2^3)&\leq C\left((\dfrac{1}{2^{N}})^\frac{7}{2}+ x_1^{\frac{28}{5}}x_2^{\frac{7}{5}}+x_1^{\frac{14}{5}}x_2^{\frac{21}{5}}\right)\\&\leq C(x_1^2+x_2^2+\dfrac{1}{2^{N}})^\frac{7}{2}.
	\end{split}
\end{equation*}
In a similarly way to get \eqref{fisrt},  we obtain
\begin{equation}\label{fisrt-2}|\partial_{2}\Delta G_N(x_1,x_2)|\leq C \left(x_1^2+x_2^2+\frac{1}{2^N}\right)^\frac{1}{2}.
\end{equation} 
%\begin{align*}
%	\partial_{x_1}\partial_{x_1}\Delta G_N(x_1,x_2)&=\dfrac{48x_1x_2}{x_1^2+x_2^2+\frac{1}{2^N}}+\dfrac{-64x_1^3x_2+32x_1x_2^3}{\left(x_1^2+x_2^2+\frac{1}{2^N}\right)^2}+\dfrac{16x_1x_2^3-48x_1^3x_2}{\left(x_1^2+x_2^2+\frac{1}{2^N}\right)^2}\\
%	&+\dfrac{64x_1^2\left(x_1^3x_2-x_1x_2^3\right)}{\left(x_1^2+x_2^2+\frac{1}{2^N}\right)^3}+\dfrac{1}{2^{N-1}}\dfrac{6x_1x_2}{\left(x_1^2+x_2^2+\frac{1}{2^N}\right)^2}\\
%	&+\dfrac{1}{2^{N-1}}\dfrac{8x_1x_2^3-16x_1^3x_2-4x_1(3x_1^2x_2-x_2^3)}{\left(x_1^2+x_2^2+\frac{1}{2^N}\right)^3}+\dfrac{24}{2^{N-1}}\dfrac{(x_1^4x_2-x_1^2x_2^3)x_1}{\left(x_1^2+x_2^2+\frac{1}{2^N}\right)^4}.
%\end{align*}
%For the term containing $\dfrac{1}{2^{N-1}}$ in the above numerator, we use the Young inequality, for example
% \begin{align*}
%	\dfrac{24}{2^{N-1}}\dfrac{(x_1^4x_2-x_1^2x_2^3)x_1}{\left(x_1^2+x_2^2+\frac{1}{2^N}\right)^4}\leq \dfrac{48}{\left(x_1^2+x_2^2+\frac{1}{2^N}\right)^4}\left(\dfrac{\left(\frac{1}{2^N}\right)^4}{2}+\dfrac{3}{4}\left(x_1^\frac{20}{3}x_2^\frac{4}{3}+x_1^4x_2^4\right)\right).\end{align*}
%Therefore, we can infer that the degree of $\partial_{x_1}\Delta G_N$ is 1, and the degree of $	\partial_{x_1}\partial_{x_1}\Delta G_N$ is 0. Similarly, for $\forall i,j \in\{x_1, x_2\}$, the degree of $\partial_i \partial_j \Delta G_N$ is 0.
%{\bf Scaling Behavior.} 
By explicitly expressing the second-order, third-order and fourth-order derivatives of $\Delta G_N$ and performing a similar scaling analysis, 
for $\forall$ $i,j,k,l\in\{1, 2\}$,  we derive
\begin{equation}\label{QN-234-degree}
	\begin{aligned}
		&|\partial_i\partial_j\Delta G_N(x_1,x_2)|\leq C,\\
		&|\partial_i\partial_j\partial_k\Delta G_N(x_1,x_2)|\leq \dfrac{C}{\left(x_1^2+x_2^2+\frac{1}{2^N}\right)^\frac{1}{2}},\\
		&|\partial_i\partial_j\partial_k\partial_l\Delta G_N(x_1,x_2)|\leq \dfrac{C}{x_1^2+x_2^2+\frac{1}{2^N}}.
	\end{aligned}
\end{equation}
{\bf Norm Estimates:}
These scaling properties above imply the following $L^2$
bounds on $B({\bf 0};2)$
\begin{equation}\label{0-orderL2}
	\begin{split}
	\left\|\partial_i\Delta G_N\right\|_{L^2(B({\bf 0};2))} \leq& \left\|\left(x_1^2+x_2^2+\frac{1}{2^N}\right)^\frac{1}{2}\right\|_{L^2(B({\bf 0};2))}\leq C .
	\end{split}
\end{equation}
\begin{equation}\label{1-orderL2}
	\begin{split}
	\left\|\partial_i\partial_j\Delta G_N\right\|_{L^2(B({\bf 0};2))} \leq C.
	\end{split}
\end{equation}
\begin{equation}\label{2-orderL2}
	\begin{split}
\left\|\partial_i\partial_j\partial_k\Delta G_N\right\|_{L^2(B({\bf 0};2))}\leq&	\left\|\dfrac{1}{\left(x_1^2+x_2^2+\frac{1}{2^N}\right)^\frac{1}{2}}\right\|_{L^2(B({\bf 0};2))}\leq C\sqrt{N}.
	\end{split}
\end{equation}
\begin{equation}\label{QN-234-degree-L2norm}
	\begin{split}
		\left\|\partial_i\partial_j\partial_k\partial_l\Delta G_N\right\|_{L^2(B({\bf 0};2))}&\leq
	\left\|\dfrac{1}{x_1^2+x_2^2+\frac{1}{2^N}}\right\|_{L^2(B({\bf 0};2))}	
		\leq C2^\frac{N}{2}.
	\end{split}
\end{equation}
Moreover, we have the  uniform $L^\infty$
bounds for first and second derivatives:
\begin{align}\label{GN-L-infty}
	\left\|	\partial_i \Delta G_N\right\|_{L^\infty (B({\bf 0};2))}+\left\|	\partial_i \partial_j \Delta G_N\right\|_{L^\infty (B({\bf 0};2)) }\leq C.
\end{align}
{\bf Singular Behavior:}
However, a special fourth-order derivative of $G_N$ exhibits logarithmic singularity
\begin{align*}
	\partial_{1}^3	\partial_{2}G_N(x_1,x_2)=6\log(x_1^2+x_2^2+\dfrac{1}{2^N})+G^1_N(x_1,x_2),
\end{align*}
where the remainder term $G^1_N$ satisfies the uniform bound $$\left\|G^1_N\right\|_{L^\infty (B({\bf 0};2)) }\leq C.$$ This singular behavior yields the crucial estimate for sufficiently large  $N$
\begin{equation}\label{GNx1-3,x2}
	\begin{split}
	\left\|\partial_{1}^3	\partial_{2}G_N \right\|_{L^\infty(B({\bf 0};1))}&\geq  6|\log(\dfrac{1}{2^N})|-\left\|G_N^1(\cdot)\right\|_{L^\infty B({\bf 0};2)}\\ &\geq  CN.\end{split}
\end{equation}
{\bf Construction of $f_N$:}
Let $\chi\in C_{c}^{\infty}(\mr^2)$ be a smooth cut-off function satisfying
\begin{align*}
	\chi=1 \text{~on~} B({\bf 0}; 1),\quad  \chi=0 \text{~on~} B({\bf 0}; 2)^c.
\end{align*}
%	At this point, according to Lemma \ref{fN-sequence}, the function $f_N({\bf x})$ can be well-defined. Choosing the right initial data.
Since $h({\bf x})\not\equiv  0$,  there exists  ${\bf x_0}$ such that $h(\bf x_0) \neq 0$.
We define \begin{align*}
	f_N({\bf x})=\partial_{x_1}\Delta \left(G_N({\bf x}-{\bf x_0})\chi ({\bf x}-{\bf x_0})\right),
\end{align*}
where ${\bf x}=(x_1,x_2)$ and $G_N$ is the function constructed above. It is obvious  that each $f_N$ belongs to  $C_{c}^{\infty}(\mr^2)$.\\[1mm]
{\bf Decomposition and Estimates:}
The function $f_N$ can be decomposed into two distinct parts:
\begin{align}\label{fN-two-parts}
f_N({\bf x})= \partial_{x_1}\Delta \big(G_N({\bf x}-{\bf x_0})\big)\chi ({\bf x}-{\bf x_0})+\widetilde{f}_N({\bf x}),
\end{align}
where   $\widetilde{f}_N({\bf x})$ is the remainder term, supported in the annulus, i.e.,
\begin{align*}
	 \text{supp} \widetilde{f}_N({\bf x})\subset \{{\bf x}; ~1\leq |{\bf x}-{\bf x_0}|\leq 2\}.
\end{align*}
As a result, $G_N$ and any order derivative of $G_N$ have a uniform upper bound in $\text{supp} \widetilde{f}_N({\bf x})$, which implies that, for $m=0,1,2,3$
\begin{align}\label{fNrest-uniform-upperbound}
		\| \nabla^m\widetilde{f}_N(\cdot)\|_{L^2(\mr^2)}, \|\nabla^m \widetilde{f}_N(\cdot)\|_{L^\infty(\mr^2)}\leq C.
\end{align}
{\bf $W^{1,\infty}$-estimate:}
Combining \eqref{fN-two-parts} with \eqref{GN-L-infty} and \eqref{fNrest-uniform-upperbound}, we obtain $$\|f_N(\cdot)\|_{W^{1,\infty}(\mr^2)}\leq  c,$$ which finishes the proof of \eqref{2.1}.\\[1mm]
%\begin{align*}
%	&\|f_N(\cdot)\|_{L^\infty(\mr^2)}\\ \leq
%	& \|\partial_{x_1}\Delta \left(G_N({\bf x}-{\bf x_0})\right)\chi ({\bf x}-{\bf x_0})\|_{L^\infty(B({\bf x},2))}+	\|\widetilde{f}_N(\cdot)\|_{L^\infty(\mr^2)}\\ \leq
%	& C+\widetilde{C}\leq C_1.
%\end{align*}
%Similar to \eqref{fN-two-parts}, for $\nabla f_N$, we can also focus the derivative on the part of $G_N$ separately. Since \eqref{GN-L-infty}, we can conclude that 
%In a similar fashion to focus the derivative on the part of $G_N$, we conclude that by using  \eqref{fN-two-parts} and \eqref{GN-L-infty} again 
%\begin{align*}
%	\|\nabla f_N(\cdot)\|_{L^\infty(\mr^2)}\leq C_2.
%\end{align*}
%Hence, we  obtain  $\|f_N(\cdot)\|_{W^{1,\infty}(\mr^2)}\leq C_1+\widetilde{C} \leq c$. 
{\bf $H^{3}$-estimate:}
Using \eqref{0-orderL2}-\eqref{QN-234-degree-L2norm}, \eqref{fN-two-parts} and  \eqref{fNrest-uniform-upperbound} yields
\begin{equation}\label{0-fN}
	\begin{split}
		\|f_N(\cdot)\|_{L^2(\mr^2)}&\leq C.
	\end{split}
\end{equation}
\begin{equation}\label{1-fN}
	\begin{split}
		\|\nabla f_N(\cdot)\|_{L^2(\mr^2)}&\leq C.
	\end{split}
\end{equation}
\begin{equation}\label{2-fN}
	\begin{split}
	\|\nabla^2 f_N(\cdot)\|_{L^2(\mr^2)}&\leq C\sqrt{N}.
	\end{split}
\end{equation}
\begin{equation}\label{3-fN}
	\begin{split}
\|\nabla^3 f_N(\cdot)\|_{L^2(\mr^2)}&\leq C2^\frac{N}{2}.
\end{split}\end{equation}
{\bf Besov Space Bounds:}
%Now, we are ready to prove \eqref{2.0} and \eqref{2.2}.
By virtue of the continuous    embedding  $B^2_{2,1}\hookrightarrow B^{\frac{2}{p}+1}_{p,1}$ combined with Lemma \ref{lem:besov_compact}, we obtain
\begin{equation*}\begin{split}
	&\|f_N(\cdot)\|_{B^{\frac{2}{p}+1}_{p,1}}\\ \leq
	&	C\|f_N(\cdot)\|_{\dot{B}^2_{2,1}}
	\\ \leq&
	C\|f_N(\cdot)\|_{\dot{H}^2}\left(1+\log^\frac{1}{2}\left(\dfrac{\|f_N(\cdot)\|_{\dot{H}^1}+\|f_N(\cdot)\|_{\dot{H}^3}}{\|f_N(\cdot)\|_{\dot{H}^2}}\right)\right),\end{split}
\end{equation*}
where the last inequality follows from an application of Lemma \ref{besov-log-lemma}.
Therefore, plugging \eqref{1-fN}-\eqref{3-fN} into the above inequality gives to
\begin{equation*}\begin{split}
		\|f_N(\cdot)\|_{B^{\frac{2}{p}+1}_{p,1}} \leq
		&C\sqrt{N}\left(1+\left(\log \dfrac{C+2^\frac{N}{2}}{\sqrt{N}}\right)^\frac{1}{2}\right)\\ \leq& C\sqrt{N}\left(1+C\sqrt{N}\right)\leq cN,
	\end{split}
\end{equation*}which finishes the proof of \eqref{2.2}.
Similarly,
\begin{align*}
	&\|f_N(\cdot)\|_{B^{\frac{2}{p}}_{p,1}}\\ \leq&
	C	\|f_N(\cdot)\|_{\dot{B}^1_{2,1}}\\ \leq&
	C\|f_N(\cdot)\|_{\dot{B}^1_{2,2}}\left(1+\log^\frac{1}{2}\left(\dfrac{\|f_N(\cdot)\|_{\dot{B}^0_{2,2}}+\|f_N(\cdot)\|_{\dot{B}^2_{2,2}}}{\|f_N(\cdot)\|_{\dot{B}^1_{2,2}}}\right)\right)\\ 
	 \leq& C\left(1+\left(\log N\right)^{\frac{1}{2}}\right)\leq c\left(\log N\right)^{\frac{1}{2}},
\end{align*}
which  proves \eqref{2.0}.\\[1mm]
{\bf Key Lower Bound:}
Since  $\mathcal{R}_{1}^2\partial_{1}\Delta =-\partial_{1}^3$, we have
\begin{align*}
	\mathcal{R}_{1}^2\nabla^{\perp} f_N({\bf x})=-\nabla^\perp \partial_{1}^3\big(G_N({\bf x}-{\bf x_0})\chi ({\bf x}-{\bf x_0})\big).
\end{align*}
Then, for sufficiently large $N,$ we have
%Similar to the decomposition of \eqref{fN-two-parts}, and then combined with , it follows that
\begin{equation*}\begin{split}
		&\left\|h(\cdot)\mathcal{R}_{1}^2\nabla^\perp f_N(\cdot)\right\|_{L^\infty}\\ \geq&\left\|h(\cdot)\partial_{1}^3\partial_{2}\big(G_N({\bf x}-{\bf x_0})\chi ({\bf x}-{\bf x_0})\big)\right\|_{L^\infty(B({\bf x_0},1))}\\
		\geq & \left\|h(\cdot)\partial_{1}^3\partial_{2}\big(G_N({\bf x}-{\bf x_0})\big)\chi ({\bf x}-{\bf x_0})\right\|_{L^\infty(B({\bf x_0},1))}-C\\
		= & \left\|h(\cdot)\partial_{1}^3\partial_{2}G_N({\bf x}-{\bf x_0})\right\|_{L^\infty(B({\bf x_0},1))}-C\\
		\geq &CN|h({\bf x_0})|-C\geq c_1N,
	\end{split}
\end{equation*} where in the last inequality  \eqref{GNx1-3,x2} has been used.
Hence, \eqref{2.3.1} is obtained  and  the proof of this lemma is also completed.
\end{proof}

\section{Local existence and uniqueness}\label{sec:4}
In this section, we are going to establish the local existence and uniqueness  of system \eqref{p-IPM}. We first state a Gronwall-type inequality.
\begin{lemma}\label{gronwall}
	Let $a(t)$ and $f(t)$ be non-negative continuous functions on $[t_0, t_1]$. Let  $\alpha>0$ and $C$ be a non-negative constant. If $f(t)$ satisfies, for any $t \in [t_0, t_1]$, 
	\begin{align}\label{gron1}
		f(t)\leq C+\int_{t_0}^{t}a(s)f^\alpha(s)\dd s,
	\end{align}
	then, when $\alpha = 1$, it holds 
	\begin{align}\label{gron-alpha=1}
		f(t)\leq C\exp \left(\int_{t_0}^{t}a(s)\dd s\right).
	\end{align}
	On the other hand, when $\alpha\neq 1$, there holds
	\begin{align}\label{gron-alpha neq 1}
		f(t)\leq \left[C^{1-\alpha}+(1-\alpha)\int_{t_0}^ta(s)\dd s\right]^{\frac{1}{1-\alpha}}.
	\end{align}
\end{lemma}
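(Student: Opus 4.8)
The plan is to reduce the integral inequality \eqref{gron1} to a scalar differential inequality for a majorant of $f$, and then integrate that differential inequality explicitly, treating $\alpha=1$ and $\alpha\neq 1$ in parallel. First I would remove the harmless degeneracy coming from $C$ (or the integral) possibly vanishing: fix $\varepsilon>0$ and set
\begin{align*}
	F_\varepsilon(t)\define C+\varepsilon+\int_{t_0}^{t}a(s)f^\alpha(s)\dd s,
\end{align*}
so that $F_\varepsilon(t)\geq C+\varepsilon>0$ on $[t_0,t_1]$, $F_\varepsilon(t_0)=C+\varepsilon$, and $f(t)\leq F_\varepsilon(t)$ for every $t$ by \eqref{gron1}. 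Since $a$ is continuous and $s\mapsto f^\alpha(s)$ is continuous (because $f\geq 0$ is continuous and $\alpha>0$), $F_\varepsilon$ is $C^1$ with $F_\varepsilon'(t)=a(t)f^\alpha(t)$; combining $0\leq f\leq F_\varepsilon$ with $\alpha>0$ gives $f^\alpha\leq F_\varepsilon^\alpha$, whence
\begin{align*}
	F_\varepsilon'(t)\leq a(t)\,F_\varepsilon^\alpha(t),\qquad t\in[t_0,t_1].
\end{align*}

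When $\alpha=1$, divide by $F_\varepsilon>0$ to get $(\log F_\varepsilon)'(t)\leq a(t)$, integrate over $[t_0,t]$, and exponentiate to obtain $F_\varepsilon(t)\leq(C+\varepsilon)\exp\!\big(\int_{t_0}^{t}a(s)\dd s\big)$; since $f\leq F_\varepsilon$, letting $\varepsilon\downarrow 0$ yields \eqref{gron-alpha=1}. When $\alpha\neq 1$, multiply the differential inequality by $(1-\alpha)F_\varepsilon^{-\alpha}(t)$ — a positive factor when $\alpha<1$, a negative one when $\alpha>1$ — so that after simplification
\begin{align*}
	\big(F_\varepsilon^{1-\alpha}\big)'(t)&\leq(1-\alpha)a(t)\quad\text{if }\alpha<1,\\
	\big(F_\varepsilon^{1-\alpha}\big)'(t)&\geq(1-\alpha)a(t)\quad\text{if }\alpha>1,
\end{align*}
and integrating over $[t_0,t]$ gives
\begin{align*}
	F_\varepsilon^{1-\alpha}(t)&\leq(C+\varepsilon)^{1-\alpha}+(1-\alpha)\int_{t_0}^{t}a(s)\dd s\quad\text{if }\alpha<1,\\
	F_\varepsilon^{1-\alpha}(t)&\geq(C+\varepsilon)^{1-\alpha}+(1-\alpha)\int_{t_0}^{t}a(s)\dd s\quad\text{if }\alpha>1.
\end{align*}
Because $x\mapsto x^{1/(1-\alpha)}$ is increasing on $(0,\infty)$ when $\alpha<1$ and decreasing when $\alpha>1$, both cases collapse to $F_\varepsilon(t)\leq\big[(C+\varepsilon)^{1-\alpha}+(1-\alpha)\int_{t_0}^{t}a(s)\dd s\big]^{1/(1-\alpha)}$ on the time interval where the bracket stays positive; since $f\leq F_\varepsilon$, sending $\varepsilon\downarrow 0$ (and using continuity of $x\mapsto x^{1-\alpha}$ at $x=C$) gives \eqref{gron-alpha neq 1}.

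The only step that calls for care is the sign bookkeeping in the case $\alpha>1$: there the bracket $(C+\varepsilon)^{1-\alpha}+(1-\alpha)\int_{t_0}^{t}a$ decreases in $t$ and the estimate is meaningful only while it remains positive, which when $C>0$ is automatic on a sufficiently small right-neighbourhood of $t_0$ — exactly the situation in which the lemma is subsequently used (for instance with $\alpha=2$ and $a\equiv C$ to produce the first a priori bound of Step 1, whose constant $\tfrac{c_g}{2}+2\|\eta_0\|_{\Bp}$ is strictly positive). Beyond this, the argument is the standard Bihari--LaSalle comparison and requires no further input, so I do not anticipate a genuine obstacle.
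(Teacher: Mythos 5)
Your proposal is correct and follows essentially the same route as the paper: both arguments differentiate the right-hand-side majorant of \eqref{gron1}, obtain the separable differential inequality $F'\leq aF^\alpha$, and integrate it (via $\log F$ for $\alpha=1$ and via $F^{1-\alpha}$ for $\alpha\neq 1$), finishing with the monotonicity of $x\mapsto x^{1/(1-\alpha)}$. Your $\varepsilon$-regularization and the explicit sign bookkeeping for $\alpha>1$ are minor refinements that tidy up the degenerate case $C=0$ and the direction of the final inequality, but they do not change the underlying Bihari--LaSalle argument.
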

\begin{proof}
	For $\alpha=1$,
	\begin{align*}
		\dfrac{\text{d}}{\text{d} t}\left(C+\int_{t_0}^{t}a(s)f(s)\dd s\right)\leq a(t)\left(C+\int_{t_0}^{t}a(s)f(s)\dd s\right).
	\end{align*}
	By applying the chain rule for derivative,
	\begin{align*}
		\dfrac{\text{d}}{\text{d} t}\ln \left(C+\int_{t_0}^{t}a(s)f(s)\dd s\right)\leq a(t).
	\end{align*}
	Integrate both sides from $t_0$ to $t$
	\begin{align*}
		\ln \left(C+\int_{t_0}^{t}a(s)f(s)\dd s\right)-\ln C\leq \int_{t_0}^{t}a(s)\dd s.
	\end{align*}
	Thus,
	\begin{align*}
		C+\int_{t_0}^{t}a(s)f(s)\dd s\leq C\exp\left(\int_{t_0}^{t}a(s)\dd s\right)
	\end{align*}
	which further implies  \eqref{gron-alpha=1}, after combining \eqref{gron1}. 
	
	For $\alpha\neq 1$, let $A(t)=\int_{t_0}^{t}a(s)f^\alpha(s)\dd s$, then
	\begin{align*}
		\dfrac{A'(t)}{\left(C+A(t)\right)^{\alpha}}=\dfrac{a(t)f^\alpha(t)}{\left(C+A(t)\right)^{\alpha}}\leq a(t).
	\end{align*}
	Integrate both sides from $t_0$ to $t$
	\begin{align*}
		\dfrac{\left(C+A(t)\right)^{1-\alpha}}{1-\alpha}-
		\dfrac{C^{1-\alpha}}{1-\alpha}\leq \int_{t_0}^{t}a(s)\dd s.
	\end{align*}
	Therefore, by reusing \eqref{gron1},
	\begin{align*}
		\dfrac{f(t)^{1-\alpha}}{1-\alpha}\begin{cases}
			\leq \dfrac{C^{1-\alpha}}{1-\alpha}+\int_{t_0}^{t}a(s)\dd s,&0<\alpha<1,\\
			\geq \dfrac{C^{1-\alpha}}{1-\alpha}+\int_{t_0}^{t}a(s)\dd s,&\alpha>1.
		\end{cases}
	\end{align*}
	Finally, \eqref{gron-alpha neq 1} holds for both $\alpha > 1$ and $0<\alpha < 1$.
\end{proof}

We are ready to prove the local
existence and uniqueness in a suitable functional setting.

\begin{prop}\label{local}
	Let $\eta_0\in B^{\frac{2}{p}+1}_{p,1}$ with $2< p<\infty$.
	Then, there exists   a unique solution $\eta$ of system \eqref{p-IPM}  satisfying $\eta \in L^\infty(0,T; B^{\frac{2}{p}+1}_{p,1})$ for some $T>0$. Moreover, for $t\in[0,T],$ the following estimates hold
	\begin{align*}
		\lef \eta(t)\rig_{B^{\frac{2}{p}+1}_{p,1}}\leq \dfrac{\frac{c_g}{2}+2\left\| \eta_0\right\|_{B^{\frac{2}{p}+1}_{p,1}}}{1-C\left(\frac{c_g}{2}+2\left\| \eta_0\right\|_{B^{\frac{2}{p}+1}_{p,1}}\right)t},
	\end{align*}
	and \begin{align*}
		\|\eta(t)\|_{B^{\frac{2}{p}}_{p,1}}\leq \|\eta_0\|_{B^{\frac{2}{p}}_{p,1}}\e^{Ct\left(\displaystyle\sup_{0\leq s\leq t}\|\eta(s)\|_{B^{\frac{2}{p}+1}_{p,1}}+c_g\right)},
	\end{align*}
where  $c_g$  is a fixed constant characterizing the background stratification.
\end{prop}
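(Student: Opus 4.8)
The plan is to read \eqref{p-IPM} as a linear transport equation for $\eta$ — with drift $u=\nabla^\perp(-\Delta)^{-\frac12}\mathcal{R}_1\eta$ and forcing $-g'(x_2)\mathcal{R}_1^2\eta$ — in which both the drift and the forcing cost $\eta$ no more than one derivative (indeed $\nabla u=\nabla\nabla^\perp(-\Delta)^{-1}\partial_{x_1}\eta$ is a degree-zero Calder\'on--Zygmund operator applied to $\partial_{x_1}\eta$, and $\mathcal{R}_1^2$ is degree zero). Existence and uniqueness then follow the standard route for transport equations in critical Besov spaces: construct $\eta^{(n+1)}$ by solving the linear problems $\partial_t\eta^{(n+1)}+u^{(n)}\cdot\nabla\eta^{(n+1)}=-g'\mathcal{R}_1^2\eta^{(n)}$ with $u^{(n)}=\nabla^\perp(-\Delta)^{-\frac12}\mathcal{R}_1\eta^{(n)}$; use the a priori estimates below (which hold verbatim for each linear equation) to get a uniform bound $\sup_n\|\eta^{(n)}\|_{L^\infty(0,T;B^{\frac2p+1}_{p,1})}\le M$ on a short interval $[0,T]$; show that $\{\eta^{(n)}\}$ is Cauchy in the weaker norm $L^\infty(0,T;B^{\frac2p}_{p,1})$; pass to the limit and recover the top regularity of the limit from the uniform bound by a Fatou/lower-semicontinuity argument. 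Uniqueness comes from the same weaker-norm difference estimate, which also supplies the contraction that closes the iteration; here the embedding $B^{\frac2p+1}_{p,1}\hookrightarrow W^{1,\infty}$ (via $B^{\frac2p}_{p,1}\hookrightarrow L^\infty$) is what lets $\nabla\eta$ act as an $L^\infty$ multiplier. The substance is the two a priori bounds.

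For the $B^{\frac2p+1}_{p,1}$ bound, apply $\Delta_j$ and pair with $|\Delta_j\eta|^{p-2}\Delta_j\eta$: since $\diver u=0$ the transport term leaves only the commutator $R_j=[u\cdot\nabla,\Delta_j]\eta$, so $\frac{d}{dt}\|\Delta_j\eta\|_{L^p}\le\|R_j\|_{L^p}+\|\Delta_j(g'\mathcal{R}_1^2\eta)\|_{L^p}$. Multiplying by $2^{j(\frac2p+1)}$ and summing, Lemma~\ref{commutator,besov} used at its endpoint $\sigma=1+\frac2p$, $r=1$ (with divergence-free $v=u$) gives $\sum_j2^{j(\frac2p+1)}\|R_j\|_{L^p}\le C\|\nabla u\|_{B^{\frac2p}_{p,1}}\|\eta\|_{B^{\frac2p+1}_{p,1}}\le C\|\eta\|_{B^{\frac2p+1}_{p,1}}^2$, the last step because $\nabla u$ is a degree-zero operator applied to $\partial_{x_1}\eta$. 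For the forcing, the algebra estimate in Lemma~\ref{besov+embedding+interpolation} together with boundedness of $\mathcal{R}_1^2$ on $B^{\frac2p+1}_{p,1}$ and on $B^{\frac2p}_{p,1}$ ($1<p<\infty$) gives $\|g'\mathcal{R}_1^2\eta\|_{B^{\frac2p+1}_{p,1}}\le C\|g'\|_{L^\infty}\|\eta\|_{B^{\frac2p+1}_{p,1}}+C\|g'\|_{B^{\frac2p+1}_{p,1}}\|\mathcal{R}_1^2\eta\|_{L^\infty}\le Cc_g\|\eta\|_{B^{\frac2p+1}_{p,1}}$, where $\|g'\|_{B^{\frac2p+1}_{p,1}}\le\|g\|_{B^{\frac2p+2}_{p,1}}\le c_g$ (and likewise $\|g'\|_{L^\infty}\le Cc_g$), and — the crucial point — $\|\mathcal{R}_1^2\eta\|_{L^\infty}$ is controlled not by $\|\eta\|_{L^\infty}$ but by $C\|\mathcal{R}_1^2\eta\|_{B^{\frac2p}_{p,1}}\le C\|\eta\|_{B^{\frac2p}_{p,1}}\le C\|\eta\|_{B^{\frac2p+1}_{p,1}}$. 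Hence $E(t):=\|\eta(t)\|_{B^{\frac2p+1}_{p,1}}$ satisfies $E'\le CE^2+Cc_gE$, and comparison with the associated Bernoulli ODE (Lemma~\ref{gronwall}, after regrouping) yields the stated bound; this fixes the existence time $T$.

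For the $B^{\frac2p}_{p,1}$ bound, repeat the Littlewood--Paley energy estimate at the subcritical level $\sigma=\frac2p<1+\frac2p$, where the interior case of Lemma~\ref{commutator,besov} gives $\sum_j2^{2j/p}\|R_j\|_{L^p}\le C\|\nabla u\|_{B^{\frac2p}_{p,1}}\|\eta\|_{B^{\frac2p}_{p,1}}\le C\|\eta\|_{B^{\frac2p+1}_{p,1}}\|\eta\|_{B^{\frac2p}_{p,1}}$, while for the forcing Lemma~\ref{riesz-product} (with $f=g'$ and the Riesz factor taken from $\mathcal{R}_1^2$) gives $\|g'\mathcal{R}_1^2\eta\|_{B^{\frac2p}_{p,1}}\le Cc_g\|\eta\|_{B^{\frac2p}_{p,1}}$ without ever invoking an $L^\infty$ bound on $\mathcal{R}_1\eta$. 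Thus $\frac{d}{dt}\|\eta\|_{B^{\frac2p}_{p,1}}\le C\big(\|\eta\|_{B^{\frac2p+1}_{p,1}}+c_g\big)\|\eta\|_{B^{\frac2p}_{p,1}}$, and Lemma~\ref{gronwall} with $\alpha=1$ produces the claimed exponential bound. The same computation for $\eta_1-\eta_2$ (two solutions with identical data), estimating $u_1-u_2$ by $\eta_1-\eta_2$ in $B^{\frac2p}_{p,1}$ and using $\nabla\eta_2\in L^\infty$, gives a linear Grönwall inequality with zero forcing, hence $\eta_1\equiv\eta_2$; that same difference bound is the contraction estimate closing the iteration.

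The delicate point throughout is the forcing $g'(x_2)\mathcal{R}_1^2\eta$: since $\mathcal{R}_1^2$ is unbounded on $L^\infty$ — exactly the mechanism driving the ill-posedness proved later in the paper — one cannot run any of these estimates at the $W^{1,\infty}$ level and must stay inside the $L^p$-based critical Besov scale, treating the product $g'\cdot\mathcal{R}_1^2\eta$ via the para-product/Riesz-commutator estimate of Lemma~\ref{riesz-product} and the boundedness of $\mathcal{R}_1^2$ on $B^s_{p,r}$ ($1<p<\infty$), never via Hölder's inequality. Combined with the criticality of $B^{\frac2p+1}_{p,1}$ — which forces Lemma~\ref{commutator,besov} to be used precisely at its endpoint $\sigma=1+\frac2p$, $r=1$, and yields a quadratic, not linear, differential inequality — this makes $T$ genuinely depend on $\|\eta_0\|_{B^{\frac2p+1}_{p,1}}$ and $c_g$. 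One should also note that a solution known only to lie in $L^\infty(0,T;B^{\frac2p+1}_{p,1})$ must first be mollified (or the estimates read off the approximating sequence) before the Littlewood--Paley energy identities are legitimate, which is the routine technical check that makes the a priori bounds rigorous.
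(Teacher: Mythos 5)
Your proposal is correct and follows essentially the same route as the paper: Littlewood--Paley energy estimates with the commutator lemma at its endpoint $\sigma=1+\frac2p$, $r=1$, product/Riesz estimates for the forcing $g'\mathcal{R}_1^2\eta$, and the two-regime Gronwall lemma, with existence/uniqueness delegated to the standard iteration (which the paper itself only sketches by citation). The only step you leave implicit is the precise "regrouping" that turns $E'\le CE^2+Cc_gE$ into the displayed bound --- the paper does this by applying the $\alpha=2$ Gronwall inequality to $f=\sqrt{c_g}E^{1/2}+E$, whose square dominates $E^2+c_gE$ and whose initial value is at most $\frac{c_g}{2}+2\|\eta_0\|_{B^{\frac2p+1}_{p,1}}$ --- but this is exactly the manipulation your outline calls for.
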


\begin{proof}
	
%The local well-posedness of system \eqref{1.1} with initial data 
%$\rho_0\in H^s(\mr^2),$  $s>2$ was established by C\'{o}rdoba et al. in \cite{cordoba-2007-ipm-local}. In a very similar way except carefully dealing with $-g'(x_2)\mathcal{R}_{1}^2$,  we can prove the local well-posedness of system \eqref{p-IPM} with initial data  $\eta_0=\rho_0-g(x_2)\in B^{\frac{2}{p}+1}_{p,1}(\mr^2),$  $2<p<\infty$. 
For the sake of simplicity,   we only sketch a priori estimates that can be used to establish local regularity of system \eqref{p-IPM} with initial data  $\eta_0\in B^{\frac{2}{p}+1}_{p,1},$  $2<p<\infty$.
With these estimates, a fully
	rigorous argument can be given in a standard way, using  smooth mollifier
	approximations like in \cite{majda-incompressible flow}.
	
%	In \cite{cordoba-2007-ipm-local}, C\'{o}rdoba et al. established the local well-posedness result of system \eqref{1.1} with initial data  $\rho_0\in H^s(\mr^2),$  $s>2$. In a very similar way, we can prove the local well-posedness of system \eqref{p-IPM} with initial data  $\eta_0=\rho_0-g(x_2)\in B^{\frac{2}{p}+1}_{p,1}(\mr^2),$  $2<p<\infty$. 
%	For the sake of simplicity,   we only  prove the a priori  estimates.
	
	For any integer $k \geq-1$, operating $\Delta_k$ on $\eqref{p-IPM}_1$ yields
	\begin{equation}\label{3.1}
		\partial_t \Delta_k \eta+\left(u\cdot \nabla\right)\Delta_k \eta=-[u\cdot\nabla, {\Delta}_k]\eta+\Delta_k\left(-g'(x_2)\mathcal{R}_{1}^2 \eta\right).	
	\end{equation}
	Taking the $L^p$-norm of \eqref{3.1}, we have
	\begin{equation*}\begin{split}
			\lef \Delta_k\eta \rig_{L^p}\leq \lef \Delta_k\eta_0\rig_{L^p}&+\int_0^t\lef [u\cdot\nabla, {\Delta}_k]\eta\rig_{L^p}\dd \tau\\
			&+\int_0^t\lef \Delta_k\left(-g'(x_2)\mathcal{R}_{1}^2 \eta\right)\rig_{L^p}\dd \tau.
	\end{split}\end{equation*}
	Multiplying  by $2^{k(\frac{2}{p}+1)}$ and summing over $k,$  we deduce from Lemma \ref{lem:commutator} that
	\begin{equation}\label{3.2}
		\begin{split}
			&	\left\| \eta\right\|_{B^{\frac{2}{p}+1}_{p,1}}\\ \leq& \left\| \eta_0\right\|_{B^{\frac{2}{p}+1}_{p,1}}+C\int_0^t \|\nabla u(\tau)\|_{B^{\frac{2}{p}}_{p,1}}\|\eta(\tau)\|_{\Bp}\dd\tau +C\int_0^t \|g'(x_2)\mathcal{R}_1^2\eta\|_{B^{\frac{2}{p}+1}_{p,1}}\dd\tau\\
			\leq& \left\| \eta_0\right\|_{B^{\frac{2}{p}+1}_{p,1}}+C\int_0^t \|\eta(\tau)\|_{\Bp}^2 \dd\tau +Cc_g\int_0^t \|\eta(\tau)\|_{B^{\frac{2}{p}+1}_{p,1}}\dd\tau.
		\end{split}
	\end{equation}
The last inequality follows from the following estimate
\begin{align*}\nonumber
	\|g'(x_2)\mathcal{R}_1^2\eta\|_{B^{\frac{2}{p}+1}_{p,1}}&\leq C\|g'(x_2)\|_{B^{\frac{2}{p}+1}_{\infty,1}(\mr^2)}
	\|\mathcal{R}_1^2\eta\|_{B^{\frac{2}{p}+1}_{p,1}}\\\nonumber
	&\leq C\|g'(x_2)\|_{B^{2}_{\infty,\infty}(\mr^2)}^{\frac{1}{p}+\frac{1}{2}}
	\|g'(x_2)\|_{B^{0}_{\infty,\infty}(\mr^2)}^{\frac{1}{2}-\frac{1}{p}}\|\eta\|_{B^{\frac{2}{p}+1}_{p,1}}\\\nonumber
	&\leq C\|g'(x_2)\|_{W^{2,\infty}(\mr^2)}\|\eta\|_{B^{\frac{2}{p}+1}_{p,1}}\\
	&\leq C\|g'\|_{W^{2,\infty}(\mr)}\|\eta\|_{B^{\frac{2}{p}+1}_{p,1}}\\&\leq Cc_g\|\eta\|_{B^{\frac{2}{p}+1}_{p,1}},
\end{align*}
where we have applied \eqref{product-2-infty}, Lemma \ref{Sobolev} and the embedding properties and the  interpolation inequality in Lemma \ref{lem:besov_properties}.

Therefore,
combining \eqref{3.2} with Young's inequality, we derive	
	\begin{equation*}\begin{split}
		&\sqrt{c_g}\left\| \eta\right\|_{\Bp}^\frac{1}{2}+\left\| \eta\right\|_{\Bp}\\
		&\leq \frac{c_g}{2}+\frac{3}{2}\left\| \eta\right\|_{\Bp}\\
		&\leq \frac{c_g}{2}+2\left\| \eta_0\right\|_{B^{\frac{2}{p}+1}_{p,1}}+C\int_0^t \|\eta\|_{\Bp}^2 \dd\tau +Cc_g\int_0^t \|\eta\|_{B^{\frac{2}{p}+1}_{p,1}}\dd\tau\\
		&\leq \frac{c_g}{2}+2\left\| \eta_0\right\|_{B^{\frac{2}{p}+1}_{p,1}}+C\int_0^t \left(\sqrt{c_g}\left\| \eta\right\|_{\Bp}^\frac{1}{2}+\|\eta\|_{\Bp}\right)^2 \dd\tau. 
\end{split}	
\end{equation*}
	Adopting to Lemma \ref{gronwall} with $f(t)=\sqrt{c_g}\left\| \eta\right\|_{\Bp}^\frac{1}{2}+\left\| \eta\right\|_{\Bp}$ gives to
	\begin{align*}
		\left\| \eta(t)\right\|_{\Bp}&\leq \sqrt{c_g}\left\| \eta\right\|_{\Bp}^\frac{1}{2}+\left\| \eta\right\|_{\Bp}\\
		&\leq \left[\left(\frac{c_g}{2}+2\left\| \eta_0\right\|_{B^{\frac{2}{p}+1}_{p,1}}\right)^{-1}-C\int_{0}^{t}\dd s\right]^{-1}\\
		&=\dfrac{\frac{c_g}{2}+2\left\| \eta_0\right\|_{B^{\frac{2}{p}+1}_{p,1}}}{1-C\left(\frac{c_g}{2}+2\left\| \eta_0\right\|_{B^{\frac{2}{p}+1}_{p,1}}\right)t}.
	\end{align*}
	
	Using  the very  similar way to get \eqref{3.2}, we  estimate the lower-order norm as follows
	\begin{align*}
		\|\eta\|_{B^{\frac{2}{p}}_{p,1}}\leq \|\eta_0\|_{B^{\frac{2}{p}}_{p,1}}+C\sup_{0\leq s\leq t}\|\eta(s)\|_{B^{\frac{2}{p}+1}_{p,1}}\int_{0}^t\|\eta(\tau)\|_{B^{\frac{2}{p}}_{p,1}}\dd\tau+Cc_g\int_{0}^t\|\eta(\tau)\|_{B^{\frac{2}{p}}_{p,1}}\dd\tau.
	\end{align*}
	By applying Lemma \ref{gronwall} again   with $f(t)=\|\eta(t)\|_{B^{\frac{2}{p}}_{p,1}},$ we obatain
	\begin{align*}
		\|\eta(t)\|_{B^{\frac{2}{p}}_{p,1}}\leq \|\eta_0\|_{B^{\frac{2}{p}}_{p,1}}\e^{C t\left(\displaystyle\sup_{0\leq s\leq t}\|\eta(s)\|_{B^{\frac{2}{p}+1}_{p,1}}+c_g\right)}.
	\end{align*}
Hence,   we complete the a priori estimates  of Lemma \ref{local}.
\end{proof}

\section{Proof of Theorem \ref{thm1}}\label{sec:5}

\subsection{Lower bound estimate for $\|\nabla \eta\|_{L^\infty}$}\label{sec:5.1}
To  derive the lower bound estimate for $\|\nabla \eta\|_{L^\infty}$, we apply the orthogonal gradient $\nabla^\perp$ (rather than $\nabla$) to both sides of \eqref{p-IPM}$_1$. This choice is motivated by the structure of the the Biot-Savart law governing the velocity field $u$: since $u$ involves the gradient $\nabla^\perp$, applying $\nabla^\perp$ leads to a more concise  formulation.

Denote  $\Theta=\nabla^\perp \eta$, then $\Theta$ satisfies \begin{align}\label{theta-1}
	\begin{cases}
		\partial_t \Theta+u\cdot \nabla\Theta=-g'(x_2)\mathcal{R}_{1}^2 \Theta-\left(\nabla^\perp g'(x_2)\right)\mathcal{R}_{1}^2 \eta+\Theta\cdot\nabla u,\\[1mm]
		u=	 (-\Delta)^{-\frac12}\mathcal{R}_1\Theta,\\[1mm]
		\Theta({\bf x},0)=\Theta_0=\nabla^\perp\eta_0({\bf x}),
	\end{cases}
\end{align}
where \begin{align*}
	\nabla^\perp g'(x_2)=\begin{pmatrix}
		-g''(x_2)\\
		0
	\end{pmatrix}
.\end{align*}
\begin{prop}\label{lowerbound}
 Let $\eta$ be the  solution of system \eqref{p-IPM} with $\eta_0\in B^{\frac{2}{p}+1}_{p,1},$  $2< p<\infty$. By Lemma \ref{local},  $\eta \in L^\infty(0,T; B^{\frac{2}{p}+1}_{p,1})$ for some $0<T<1$. 
Furthermore, for any $t\in[0,T]$,  the following holds:
\begin{equation}\begin{split}\label{4.6}
		\|\Theta\|_{L^\infty}\geq &t\left\| -g'\mathcal{R}_{1}^2 \Theta_0\right\|_{L^\infty}-
		\left\|\Theta_0\right\|_{L^\infty}-Ct^2\left\| \Theta_0\right\|_{B^{\frac{2}{p}}_{p,1}}\\
	&~~-Ct^2(\|u\|_{L^\infty_tL^\infty}+\|\nabla u\|_{L^\infty_tL^\infty})\left(1+\e^{2t \|\nabla u\|_{L^\infty_tL^\infty}}\right)\|\Theta\|
	_{L^\infty_tB^{\frac{2}{p}}_{p,1}}\\
    &~~-Ct\left(1+\e^{2t\|\nabla u\|_{L^\infty_tL^\infty}}\right)
    \left(\|\Theta\|_{L^\infty_tL^\infty}
    \|\Theta\|_{L^\infty_tB^\frac{2}{p}_{p,1}}
    +\|\eta\|_{L^\infty_tB^\frac{2}{p}_{p,1}}\right),
\end{split}\end{equation}
where  $C$ are universal constants depending  on $g(x_2)$,  but independent of time $t$.
\end{prop}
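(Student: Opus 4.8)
The plan is to carry out the Duhamel argument sketched after \eqref{theta-1}. Writing $\Phi(\cdot,t)$ for the flow map of $u$ (Lemma~\ref{lagrange-flowmap}), the transport part of \eqref{theta-1} gives $\partial_t(\Theta\circ\Phi)=\bigl(-g'\mathcal{R}_1^2\Theta-(\nabla^\perp g')\mathcal{R}_1^2\eta+\Theta\cdot\nabla u\bigr)\circ\Phi$; substituting $(\mathcal{R}_1^2\Theta)\circ\Phi=\mathcal{R}_1^2(\Theta\circ\Phi)+[\mathcal{R}_1^2,\Phi]\Theta$ turns this into a linear equation for $\Theta\circ\Phi$ with generator $A:=(-g'\circ\Phi)\mathcal{R}_1^2$ and source $(-g'\circ\Phi)[\mathcal{R}_1^2,\Phi]\Theta+\bigl(\Theta\cdot\nabla u-(\nabla^\perp g')\mathcal{R}_1^2\eta\bigr)\circ\Phi$. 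The crucial structural decision is to run the whole computation in the algebra $B^{\frac{2}{p}}_{p,1}$, which embeds into $L^\infty$ in two space dimensions (Lemma~\ref{besov+embedding+interpolation}), rather than in $L^\infty$ where $\mathcal{R}_1^2$ is unbounded: by the algebra property of $B^{\frac{2}{p}}_{p,1}$ together with Lemmas~\ref{riesz-product} and~\ref{phi-composite} and $\|\nabla\Phi\|_{L^\infty}\le\e^{t\|\nabla u\|_{L^\infty_tL^\infty}}$ (Lemma~\ref{lagrange-flowmap}), the operator $A$ is bounded on $B^{\frac{2}{p}}_{p,1}$ with norm $\le Cc_g$, so its (time-ordered) propagator $\e^{A(t-\tau)}$ is a genuine bounded operator there with $\|\e^{A(t-\tau)}\|_{B^{\frac{2}{p}}_{p,1}\to B^{\frac{2}{p}}_{p,1}}\le\e^{Cc_g(t-\tau)}$.

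Next I would write Duhamel's formula $\Theta\circ\Phi=\e^{At}\Theta_0+\int_0^t\e^{A(t-\tau)}\bigl[(-g'\circ\Phi)[\mathcal{R}_1^2,\Phi]\Theta+(\Theta\cdot\nabla u-(\nabla^\perp g')\mathcal{R}_1^2\eta)\circ\Phi\bigr]\dd\tau$ and peel off the leading linear part $\e^{At}\Theta_0=\Theta_0+tA\Theta_0+\bigl(\e^{At}\Theta_0-\Theta_0-tA\Theta_0\bigr)$, the last bracket being $O(t^2)$ in $B^{\frac{2}{p}}_{p,1}$ with norm $\le C\|A\|^2t^2\e^{t\|A\|}\|\Theta_0\|_{B^{\frac{2}{p}}_{p,1}}\le C_1t^2\|\Theta_0\|_{B^{\frac{2}{p}}_{p,1}}$ since $\|A\|\le Cc_g$; replacing $A(\tau)=(-g'\circ\Phi(\tau))\mathcal{R}_1^2$ by its value at time $t$ in the linear term costs one more $O(t^2)$ error of the same kind, via $\|g'\circ\Phi(\tau)-g'\circ\Phi(t)\|_{L^\infty}\le C|t-\tau|$ ($g'$ Lipschitz, Lemma~\ref{lagrange-flowmap}) and $\|\mathcal{R}_1^2\Theta_0\|_{L^\infty}\le C\|\Theta_0\|_{B^{\frac{2}{p}}_{p,1}}$. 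Since $\nabla\cdot u=0$, $\Phi(\cdot,t)$ is a measure-preserving homeomorphism, so $\|\Theta\|_{L^\infty}=\|\Theta\circ\Phi\|_{L^\infty}$, and the reverse triangle inequality gives $\|\Theta\|_{L^\infty}\ge t\|(-g'\circ\Phi)\mathcal{R}_1^2\Theta_0\|_{L^\infty}-\|\Theta_0\|_{L^\infty}$ minus the $L^\infty$-norms of the semigroup tail and of the two source integrals.

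To finish I would bound the two source integrals in $L^\infty$ by passing through $B^{\frac{2}{p}}_{p,1}$. For the commutator integral I split $[\mathcal{R}_1^2,\Phi]=\mathcal{R}_1[\mathcal{R}_1,\Phi]+[\mathcal{R}_1,\Phi]\mathcal{R}_1$ and apply Lemma~\ref{commutator-elgindi} to each piece, so $\|[\mathcal{R}_1^2,\Phi]\Theta\|_{B^{\frac{2}{p}}_{p,1}}\le C\max\{\|\Phi-\mathrm{Id}\|_{\mathrm{Lip}},\|\Phi^{-1}-\mathrm{Id}\|_{\mathrm{Lip}}\}\|\Theta\|_{B^{\frac{2}{p}}_{p,1}}$, the maximum being $\le Ct\|\nabla u\|_{L^\infty_tL^\infty}\e^{t\|\nabla u\|_{L^\infty_tL^\infty}}$ by Lemma~\ref{lagrange-flowmap}; combined with $\|\e^{A(t-\tau)}\|\le\e^{Cc_g(t-\tau)}$, the algebra estimate for the product with $-g'\circ\Phi$, and the $\tau$-integration, this yields the term $C_2c_gt^2\|\nabla u\|_{L^\infty_tL^\infty}\e^{C_0t\|\nabla u\|_{L^\infty_tL^\infty}}\|\Theta\|_{L^\infty_tB^{\frac{2}{p}}_{p,1}}$. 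In the source integral, $(\nabla^\perp g')\mathcal{R}_1^2\eta$ is controlled by Lemma~\ref{riesz-product} (with $\mathcal{R}_1^2=\mathcal{R}_1\mathcal{R}_1$), giving $\|(\nabla^\perp g')\mathcal{R}_1^2\eta\|_{B^{\frac{2}{p}}_{p,1}}\le Cc_g\|\eta\|_{B^{\frac{2}{p}}_{p,1}}$, with composition by $\Phi$ costing only $\|\nabla\Phi\|_{L^\infty}^{2/p}$ (Lemma~\ref{phi-composite}); the term $\Theta\cdot\nabla u$ is the delicate one and I would \emph{not} put its low-frequency part into $B^{\frac{2}{p}}_{p,1}$ but estimate $\|(\Theta\cdot\nabla u)\circ\Phi\|_{L^\infty}=\|\Theta\cdot\nabla u\|_{L^\infty}\le\|\Theta\|_{L^\infty}\|\nabla u\|_{L^\infty}$ directly, using $\e^{A(t-\tau)}=I+(\e^{A(t-\tau)}-I)$ so that the $I$-part contributes $\int_0^t\|\Theta\cdot\nabla u\|_{L^\infty}\dd\tau\le t\|\Theta\|_{L^\infty_tL^\infty}\|\nabla u\|_{L^\infty_tL^\infty}$ and, importantly, keeping $\|\nabla u\|_{L^\infty}$ explicit rather than absorbing it into $\|\Theta\|_{B^{\frac{2}{p}}_{p,1}}$; the remaining $(\e^{A(t-\tau)}-I)$-contributions carry an extra factor of size $t\|\nabla u\|_{L^\infty_tL^\infty}$, which accounts for the prefactor $1+t\|\nabla u\|_{L^\infty_tL^\infty}$ in \eqref{4.6}. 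Collecting the semigroup tail, the $\Phi(\tau)\to\Phi(t)$ correction, the commutator integral, and the source integral produces \eqref{4.6}.

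The step I expect to be the main obstacle is handling the time-dependent, non-symmetric propagator $\e^{A(t-\tau)}$ in tandem with the compositions by $\Phi$ and the Riesz commutators, tracking the losses $\|\nabla\Phi\|_{L^\infty}^{2/p}$ (per composition, Lemma~\ref{phi-composite}) and $\|\Phi^{\pm}-\mathrm{Id}\|_{\mathrm{Lip}}$ (per Riesz commutator, Lemma~\ref{commutator-elgindi}): everything must be done in $B^{\frac{2}{p}}_{p,1}$, so that $\mathcal{R}_1^2$ and multiplication by $g'\circ\Phi$ act boundedly, with $B^{\frac{2}{p}}_{p,1}\hookrightarrow L^\infty$ invoked only at the end, and the term $\Theta\cdot\nabla u$ must be treated so as to produce $\|\Theta\|_{L^\infty}\|\Theta\|_{B^{\frac{2}{p}}_{p,1}}$ and not a spurious $\|\Theta\|_{B^{\frac{2}{p}}_{p,1}}^2$; a secondary nuisance is absorbing the $\Phi(\tau)\to\Phi(t)$ error into the constant $C_1$.
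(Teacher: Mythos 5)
Your proposal follows the same route as the paper's proof: compose with the flow map, introduce the commutator $\left[\mathcal{R}_{1}^2,\Phi\right]\Theta$, write Duhamel's formula, Taylor-expand the propagator to isolate the linear term $t\left(-g'\circ\Phi\right)\mathcal{R}_1^2\Theta_0$ at the cost of $\left\|\Theta_0\right\|_{L^\infty}+O(t^2)$, and control the two source integrals by passing through Besov spaces that embed into $L^\infty$. The differences from the paper are bookkeeping (the paper does not insert your $\Phi(\tau)\to\Phi(t)$ correction for the time-dependent generator, and it estimates the $\Theta\cdot\nabla u-(\nabla^\perp g')\mathcal{R}_1^2\eta$ integral by splitting $B^0_{\infty,1}=\Delta_{-1}+\sum_{j\ge0}\Delta_j$ and using measure preservation in $L^p$ for the low frequencies, rather than your $I+(\e^{A(t-\tau)}-I)$ split), and both versions land on the same form of \eqref{4.6}.

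There is, however, one step that fails as written. You propose to run the commutator estimate and the composition estimates entirely in $B^{\frac{2}{p}}_{p,1}$, invoking Lemma \ref{phi-composite} with loss $\|\nabla\Phi\|_{L^\infty}^{2/p}$ and Lemma \ref{commutator-elgindi} in $B^{\frac{2}{p}}_{p,1}$. Both lemmas require $0<\alpha<1$, so at the endpoint $p=2$ (which is included in the hypothesis $2\le p<\infty$ and is the most relevant case) the exponent $\alpha=\frac{2}{p}=1$ is outside their range and neither lemma applies. The paper avoids this by estimating the commutator term in $B^{\frac{2}{q}}_{q,1}$ for some $q$ with $2\le p<q<\infty$, so that $0<\frac{2}{q}<1$, and then using $B^{\frac{2}{q}}_{q,1}\hookleftarrow B^{\frac{2}{p}}_{p,1}$ (plus $B^{\frac{2}{q}}_{q,1}\hookrightarrow L^\infty$) to return to the stated norms; for the source term it uses the $B^0_{\infty,1}$ version of the composition estimate in Lemma \ref{phi-composite}, whose logarithmic factor $1+\log\left(\|\Phi\|_{Lip}\|\Phi^{-1}\|_{Lip}\right)\le C\left(1+t\|\nabla u\|_{L^\infty_tL^\infty}\right)$ is precisely the origin of the prefactor $1+t\|\nabla u\|_{L^\infty_tL^\infty}$ in \eqref{4.6} (you attribute that prefactor to the $\e^{A(t-\tau)}-I$ tail instead, which is a workable but different accounting). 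With the interpolation in the auxiliary space $B^{\frac{2}{q}}_{q,1}$ inserted, your argument closes; without it, the $p=2$ case is not covered.
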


\begin{proof}
	Consider the Lagrangian flow-map $\Phi $ being transported by the velocity $u$,
	\begin{align*}
		\begin{cases}
			\partial_t\Phi({\bf x},t)=u(\Phi({\bf x},t),t),\\
			\Phi({\bf x},0)={\bf x}.
		\end{cases}
	\end{align*}
	Compounding the Lagrangian flow-map $\Phi$ on both sides of $\eqref{theta-1}_1$  yields
\begin{align*}
\left(	\partial_t \Theta+u\cdot \nabla\Theta\right)\circ \Phi=\left(\Theta\cdot\nabla u-g'(x_2) \mathcal{R}_{1}^2 \Theta-\left(\nabla^\perp g'(x_2) \right)\mathcal{R}_{1}^2 \eta\right)\circ \Phi.
\end{align*}
Then,  $\Theta\circ \Phi$ satisfies
	\begin{align*}
			\partial_t \left(\Theta\circ \Phi\right)&+g'(x_2)\mathcal{R}_{1}^2 \left(\Theta\circ\Phi\right)\\
			&=\left(-g' \circ\Phi\right)\left[\mathcal{R}_{1}^2 ,\Phi\right]\Theta+\left(g'-g' \circ\Phi\right)\mathcal{R}_{1}^2 \left(\Theta\circ\Phi\right)\\
		&~~~~~~+\left(\Theta\cdot\nabla u-\left(\nabla^\perp g' \right)\mathcal{R}_{1}^2 \eta\right)\circ \Phi
	\end{align*}
	with
	\begin{align*}
		\left[\mathcal{R}_{1}^2 ,\Phi\right]\Theta=\mathcal{R}_{1}^2 (\Theta)\circ\Phi-\mathcal{R}_{1}^2 \left(\Theta\circ\Phi\right).
	\end{align*}
By Duhamel's principle,
	\begin{align}\nonumber
		\Theta\circ \Phi=\e^{-g'\mathcal{R}_{1}^2 t}\Theta_0&+\int_{0}^t\e^{-g'\mathcal{R}_{1}^2 (t-\tau)}\left(-g' \circ\Phi\right)\left[\mathcal{R}_{1}^2 ,\Phi\right]\Theta(\tau)\dd\tau\\\nonumber
		&+\int_{0}^t\e^{-g'\mathcal{R}_{1}^2 (t-\tau)}\left(\left(g'-g' \circ\Phi\right)\mathcal{R}_{1}^2 \left(\Theta\circ\Phi\right)\right)(\tau)\dd\tau\\\label{4.2}
	&+\int_{0}^t\e^{-g'\mathcal{R}_{1}^2 (t-\tau)}\left(\left(\Theta\cdot\nabla u-\left(\nabla^\perp g' \right)\mathcal{R}_{1}^2 \eta\right)\circ \Phi\right)(\tau)\dd\tau,
	\end{align}  where  the operator  $\e^{-g'\mathcal{R}_{1}^2 t}$ can be written as follows by the Taylor series expansion,
\begin{align}\label{4.3}
\e^{-g'\mathcal{R}_{1}^2 t}=I+(-g'\mathcal{R}_{1}^2 t)+t^2\sum_{n=2}^{\infty}\dfrac{t^{n-2}\left(-g'\mathcal{R}_{1}^2 \right)^n}{n!}.
\end{align}
Then,% we  obtain the following lower bound estimate for $\|\Theta\|_{L^\infty}$,
\begin{equation*}
\begin{split}
	\|\Theta\|_{L^\infty}&\geq \lef\e^{-g'\mathcal{R}_{1}^2 t}\Theta_0\rig_{L^\infty}-\lef\int_{0}^t\e^{-g'\mathcal{R}_{1}^2 (t-\tau)}\left(-g' \circ\Phi\right)\left[\mathcal{R}_{1}^2 ,\Phi\right]\Theta(\tau)\dd\tau\rig_{L^\infty}\\\nonumber
	&~~~-\left\|\int_{0}^t\e^{-g'\mathcal{R}_{1}^2 (t-\tau)}\left(\left(g'-g' \circ\Phi\right)\mathcal{R}_{1}^2 \left(\Theta\circ\Phi\right)\right)(\tau)\dd\tau\right\|_{L^\infty}\\
	&~~~-\left\|\int_{0}^t\e^{-g'\mathcal{R}_{1}^2 (t-\tau)}\left(\left(\Theta\cdot\nabla u-\left(\nabla^\perp g' \right)\mathcal{R}_{1}^2 \eta\right)\circ \Phi\right)(\tau)\dd\tau\right\|_{L^\infty}\\\label{4.5}
	&\define \mathfrak{B_1}-\mathfrak{B_2}-\mathfrak{B_3}-\mathfrak{B_4}.
\end{split}\end{equation*}

For $\mathfrak{B_1}$, using \eqref{4.3} gives to
	\begin{align*}
		\mathfrak{B_1}
		&\geq \left\| -g'\mathcal{R}_{1}^2 t\Theta_0\right\|_{L^\infty}-\left\|\Theta_0\right\|_{L^\infty}-\left\|t^2\sum_{n=2}^{\infty}\dfrac{t^{n-2}\left(-g'\mathcal{R}_{1}^2 \right)^n}{n!}\Theta_0\right\|_{L^\infty}.
	\end{align*}
Noting that  the Riesz operator $\mathcal{R}_{1}$ is bounded in $B^{\frac{2}{p}}_{p,1}(\mr^2)$ with $2< p<\infty$, we can get for any $0\leq t\leq T<1,$
	\begin{align*}
		\left\|t^2\sum_{n=2}^{\infty}\dfrac{t^{n-2}\left(-g'\mathcal{R}_{1}^2 \right)^n}{n!}\Theta_0\right\|_{L^\infty}
       &\leq t^2\sum_{n=2}^\infty\dfrac{1 }{n!}\|\left(-g'\right)^n\|_{L^\infty}\|\mathcal{R}_{1}^{2n}\Theta_0\|_{L^\infty}
       \\
	   &\leq 
	   C t^2\sum_{n=2}^\infty \dfrac{c_g^n }{n!}\|\Theta_0\|_{B^\frac{2}{p}_{p,1}}\\
	   &\leq Ct^2\left\| \Theta_0\right\|_{B^\frac{2}{p}_{p,1}}.
	\end{align*}
	As a result, we obtain
	\begin{align*}
	\mathfrak{B_1}\geq t\left\| -g'\mathcal{R}_{1}^2 \Theta_0\right\|_{L^\infty}-\left\|\Theta_0\right\|_{L^\infty}-C_1t^2\left\| \Theta_0\right\|_{B^{\frac{2}{p}}_{p,1}}.
	\end{align*}

For $\mathfrak{B_2}$, using \eqref{4.3}, \eqref{product-2-infty}  and  the continuous embedding  $B^{\frac{2}{p}}_{\infty,1}\hookrightarrow L^\infty$ yields
%{\color{red} applying Lemma \ref{lagrange-flowmap}-Lemma \ref{commutator-elgindi} }with $0<\frac{2}{p}<1$, it follows that
\begin{equation}\label{B2}
		\begin{split}
		\mathfrak{B_2}&=\left\|\int_{0}^t\e^{-g'\mathcal{R}_{1}^2 (t-\tau)}\left(-g' \circ\Phi\right)\left[\mathcal{R}_{1}^2 ,\Phi\right]\Theta(\tau)\dd\tau\right\|_{L^\infty}\\
				&\leq C\sum_{n=0}^\infty \dfrac{c_g^n }{n!} \int_0^t \left\|\left(-g' \circ\Phi\right)\left[\mathcal{R}_{1}^2 ,\Phi\right]\Theta(\tau)\right\|_{B^{\frac{2}{p}}_{p,1}}\dd \tau\\
				&\leq C \int_0^t \|g' \circ\Phi\|_{B^{\frac{2}{p}}_{\infty,1}}\left\|\left[\mathcal{R}_{1}^2 ,\Phi\right]\Theta(\tau)\right\|_{B^{\frac{2}{p}}_{p,1}}\dd \tau.
			%	\\	&\leq  Cc_gt^2\|\nabla u\|_{L^\infty_tL^\infty}\e^{2C_0t \|\nabla u\|_{L^\infty_tL^\infty}}\|\Theta\|_{L^\infty_tB^{\frac{2}{p}}_{p,1}},
	\end{split}\end{equation}
By  the  interpolation inequality in Lemma \ref{lem:besov_properties},
  Lemma \ref{lagrange-flowmap} and the continuous embedding  $W^{1,\infty}\hookrightarrow B^{1}_{\infty, \infty}$, we have
\begin{equation}\begin{split}\label{g-infty}
	\|g' \circ\Phi\|_{B^{\frac{2}{p}}_{\infty,1}}&\leq C\|g' \circ\Phi\|_{B^{1}_{\infty,\infty}}^{\frac{2}{p}}\|g' \circ\Phi\|_{B^{0}_{\infty,\infty}}^{1-\frac{2}{p}}\\
	&\leq C\|g'\circ \Phi\|_{W^{1,\infty}(\mr^2)}\\
	&\leq C\|g'\|_{W^{2,\infty}(\mr)}\|\nabla \Phi\|_{L^\infty}\leq C\e^{t \|\nabla u\|_{L^\infty_tL^\infty}}.
\end{split}\end{equation}
In addition, by Lemma \ref{lagrange-flowmap} and Lemma \ref{commutator-elgindi} (applied with $0<\alpha=\frac{2}{p}<1$), we have
\begin{equation*} 
	\begin{split}
\left\|\left[\mathcal{R}_{1}^2 ,\Phi\right]\Theta(\tau)\right\|_{B^{\frac{2}{p}}_{p,1}}&\leq Ct\|\nabla u\|_{L^\infty_tL^\infty}\e^{t \|\nabla u\|_{L^\infty_tL^\infty}}\|\Theta\|_{B^{\frac{2}{p}}_{p,1}}.
\end{split} 
\end{equation*}
Therefore,	\begin{align*}
	\mathfrak{B_2}\leq  Ct^2\|\nabla u\|_{L^\infty_tL^\infty}\e^{2t \|\nabla u\|_{L^\infty_tL^\infty}}\|\Theta\|_{L^\infty_tB^{\frac{2}{p}}_{p,1}}.
\end{align*}

For $\mathfrak{B_3}$, following similar arguments used to establish \eqref{B2} and \eqref{g-infty}, we obtain
\begin{equation*}\begin{split}
\mathfrak{B_3}&=\left\|\int_{0}^t\e^{-g'\mathcal{R}_{1}^2 (t-\tau)}\left(\left(g'-g' \circ\Phi\right)\mathcal{R}_{1}^2 \left(\Theta\circ\Phi\right)\right)(\tau)\dd\tau\right\|_{L^\infty}\\
&\leq C\int_{0}^t\left\|\left(\left(g'-g' \circ\Phi\right)\mathcal{R}_{1}^2 \left(\Theta\circ\Phi\right)\right)(\tau)\right\|_{B^{\frac{2}{p}}_{p,1}}\dd\tau\\
&\leq Ct\|g'-g'\circ \Phi\|_{L^\infty_tB^\frac{2}{p}_{\infty,1}}\|\mathcal{R}_{1}^2 \left(\Theta\circ\Phi\right)\|_{L^\infty_tB^\frac{2}{p}_{p,1}}\\
&\leq Ct\|g'-g'\circ \Phi\|_{L^\infty_tW^{1,\infty}}\| \Theta\circ\Phi\|_{L^\infty_tB^\frac{2}{p}_{p,1}}.
\end{split}
\end{equation*}
By  exploiting the measure-preserving property of $\Phi$ along with  Lemmas \ref{lagrange-flowmap} and \ref{phi-composite} (applied with $\alpha=\frac{2}{p}$), we have 
\begin{equation*}\begin{split}
	&\| \Theta\circ\Phi\|_{L^\infty_tB^\frac{2}{p}_{p,1}}\\
	\leq& C\|\Theta\|_{L^\infty_t L^p}+\|\nabla \Phi\|_{L^\infty}^\frac{2}{p}\|\Theta\|_{L^\infty_t\dot{B}^\frac{2}{p}_{p,1}}\\
	\leq& C\left(1+\e^{\frac{2}{p}t\|\nabla u\|_{L^\infty_tL^\infty}}\right)\|\Theta\|_{L^\infty_tB^{\frac{2}{p}}_{p,1}}.
\end{split}\end{equation*}
Furthermore, applying the mean-value theorem yields
\begin{equation*}\begin{split}
	&\|g'-g'\circ \Phi\|_{L^\infty_t	W^{1,\infty}}\\
		\leq& C\|g'\|_{W^{2,\infty}(\mr)}\|\Phi_2-x_2\|_{L^\infty_t W^{1,\infty}}\\
		\leq &Ct(\| u\|_{L^\infty_t L^\infty}+\|\nabla u\|_{L^\infty_t L^\infty}\|\nabla \Phi\|_{L^\infty_t L^\infty}).
\end{split}\end{equation*}
Therefore, $\mathfrak{B_3}$ can be established as follows
\begin{equation*}\begin{split}
		\mathfrak{B_3}& \leq Ct^2\left(1+\e^{\frac{2}{p}t\|\nabla u\|_{L^\infty_tL^\infty}}\right)(\| u\|_{L^\infty_t L^\infty}+\|\nabla u\|_{L^\infty_t L^\infty}\e^{t\|\nabla u\|_{L^\infty_tL^\infty}})\|\Theta\|_{L^\infty_tB^{\frac{2}{p}}_{p,1}}.
	\end{split}
\end{equation*}

For $\mathfrak{B_4}$, we  deduce that 
\begin{equation}	\begin{split}\label{4.4}
			\mathfrak{B_4}&=\left\|\int_{0}^t\e^{-g'\mathcal{R}_{1}^2 (t-\tau)}\left(\left(\Theta\cdot\nabla u-\left(\nabla^\perp g' \right)\mathcal{R}_{1}^2 \eta\right)\circ \Phi\right)(\tau)\dd\tau\right\|_{L^\infty}\\
			&\leq  C\int_{0}^t\left\|\left(\left(\Theta\cdot\nabla u-\left(\nabla^\perp g' \right)\mathcal{R}_{1}^2 \eta\right)\circ \Phi\right)(\tau)\right\|_{B^\frac{2}{p}_{p,1}}\dd\tau\\
				&\leq C\int_{0}^t\left\|\left(\left(\Theta\cdot\nabla u-\left(\nabla^\perp g' \right)\mathcal{R}_{1}^2 \eta\right)\circ \Phi\right)(\tau)\right\|_{L^p}\dd\tau\\
			&~~~~~~~+C\int_{0}^t\left\| \left(\left(\Theta\cdot\nabla u-\left(\nabla^\perp g' \right)\mathcal{R}_{1}^2 \eta\right)\circ \Phi\right)(\tau)\right\|_{\dot{B}^\frac{2}{p}_{p,1}}\dd\tau\\&=:\mathfrak{B_{41}}+\mathfrak{B_{42}}.
	\end{split}\end{equation}
	Since the flow map $\Phi$ is measure-preserving and  the velocity gradient satisfies $\nabla u=\mathcal{R}\mathcal{R}_1 \Theta,$  an application of the  H\"{o}lder inequality yields
\begin{align*}\nonumber
	&\left\|\left(\left(\Theta\cdot\nabla u-\left(\nabla^\perp g' \right)\mathcal{R}_{1}^2 \eta\right)\circ \Phi\right)(\tau)\right\|_{L^p}\\
	=&\left\|\left(\Theta\cdot\nabla u-\left(\nabla^\perp g' \right)\mathcal{R}_{1}^2 \eta\right)(\tau)\right\|_{L^p}\\
	\leq & C\left(\|\Theta(\tau)\|_{L^\infty}\|\nabla u(\tau)\|_{L^p}+	\left\| \eta(\tau)\right\|_{L^p}\left\|\nabla^\perp g' \right\|_{L^\infty(\mr^2)}\right)\\
	\leq& C\left(\|\Theta(\tau)\|_{L^\infty}\|\Theta(\tau)\|_{L^p}+	\left\| \eta(\tau)\right\|_{L^p}\left\|g'\right\|_{W^{1,\infty}(\mr)}\right).
\end{align*}Then, we get
\begin{equation*}\begin{split}
		\mathfrak{B_{41}}&\leq  Ct\left(\|\Theta\|_{L^\infty_tL^\infty}\|\Theta\|_{L^\infty_tL^p}+\|\eta\|_{L^\infty_tB^\frac{2}{p}_{p,1}}\right).
\end{split}\end{equation*}
In view of the estimate  \eqref{product-1-riesz} of Lemma \ref{riesz-product}, it follows that
\begin{equation}\begin{split}\label{B4-homo-1}
	\left\|\left(\Theta\cdot\nabla u\right)(\tau)\right\|_{\dot{B}^\frac{2}{p}_{p,1}}&=\|\Theta\cdot \mathcal{R}\mathcal{R}_1 \Theta(\tau)\|_{\dot{B}^\frac{2}{p}_{p,1}}\\
	&\leq C\|\Theta(\tau)\|_{L^\infty}\|\Theta(\tau)\|_{B^{\frac{2}{p}}_{p,1}}.
\end{split}\end{equation}
Using  \eqref{product-2-infty} and Lemma \ref{lem:besov_properties} again  gives to
\begin{equation}\begin{split}\label{B4-homo-2}
	\|\nabla^\perp g' \mathcal{R}_{1}^2 \eta(\tau)\|_{\dot{B}^\frac{2}{p}_{p,1}}&\leq\| g''(x_2) \mathcal{R}_{1}^2 \eta(\tau)\|_{B^\frac{2}{p}_{p,1}}\\
	&\leq C\|g''(x_2)\|_{B^{\frac{2}{p}}_{\infty,1}(\mr^2)}\|\mathcal{R}_{1}^2 \eta(\tau)\|_{B^\frac{2}{p}_{p,1}}\\
	&\leq C\|g'\|_{W^{2,\infty}(\mr)}\|\eta(\tau)\|_{B^\frac{2}{p}_{p,1}}\leq C\|\eta(\tau)\|_{B^\frac{2}{p}_{p,1}}.
\end{split}\end{equation}
By combining the estimates  \eqref{B4-homo-1} and \eqref{B4-homo-2} with Lemma \ref{phi-composite}, we obtain \begin{equation*}\begin{split}\mathfrak{B_{42}}&\leq
	C	\int_{0}^t\|\nabla \Phi (\tau) \|_{L^\infty}^\frac{2}{p}\left(\left\|\left(\Theta\cdot\nabla u\right)(\tau)\right\|_{\dot{B}^\frac{2}{p}_{p,1}}+\|\nabla^\perp g' \mathcal{R}_{1}^2 \eta(\tau)\|_{\dot{B}^\frac{2}{p}_{p,1}}\right)\dd\tau
		\\&\leq Ct\e^{\frac{2}{p}t\|\nabla u\|_{L^\infty_tL^\infty}}\left(\|\Theta\|_{L^\infty_tL^\infty}\|\Theta\|_{L^\infty_tB^\frac{2}{p}_{p,1}}+\|\eta\|_{L^\infty_tB^\frac{2}{p}_{p,1}}\right).
\end{split}\end{equation*}
Therefore, we ultimately obtain
	\begin{align*}
		\mathfrak{B_4}&\leq Ct\left(1+\e^{\frac{2}{p}t\|\nabla u\|_{L^\infty_tL^\infty}}\right)\left(\|\Theta\|_{L^\infty_tL^\infty}\|\Theta\|_{L^\infty_tB^\frac{2}{p}_{p,1}}+\|\eta\|_{L^\infty_tB^\frac{2}{p}_{p,1}}\right).
\end{align*} 
By incorporating the estimates for 	 $\mathfrak{B_1}$-$\mathfrak{B_4}$  into \eqref{4.5}, we derive a lower bound estimate for $\|\Theta\|_{L^\infty}$:
	\begin{equation*}\begin{split}
			\|\Theta\|_{L^\infty}\geq &t\left\| -g'\mathcal{R}_{1}^2 \Theta_0\right\|_{L^\infty}-
			\left\|\Theta_0\right\|_{L^\infty}-Ct^2\left\| \Theta_0\right\|_{B^{\frac{2}{p}}_{p,1}}\\
			&~~-Ct^2(\|u\|_{L^\infty_tL^\infty}+\|\nabla u\|_{L^\infty_tL^\infty})\left(1+\e^{2t \|\nabla u\|_{L^\infty_tL^\infty}}\right)\|\Theta\|
			_{L^\infty_tB^{\frac{2}{p}}_{p,1}}\\
			&~~-Ct\left(1+\e^{2t\|\nabla u\|_{L^\infty_tL^\infty}}\right)
			\left(\|\Theta\|_{L^\infty_tL^\infty}
			\|\Theta\|_{L^\infty_tB^\frac{2}{p}_{p,1}}
			+\|\eta\|_{L^\infty_tB^\frac{2}{p}_{p,1}}\right).
	\end{split}\end{equation*}
This establishes \eqref{4.6} and consequently completes the proof of the lemma. 
\end{proof}

\subsection{Conclusion of the Proof}\label{sec:5.2}
\begin{proof}[Proof of Theorem \ref{thm1}]
%	According to our assumption $g'({\bf x})\neq  0$ for $g'$ only depends on $x_2$, not on $x_1$, we can conclude that there exists ${\bf x}=(x_0,y_0)$ such that $g'(x_0,y_0) \neq 0$ for $x_0$ can be taken arbitrarily because $g'$ does not depend on the first component.
	
%	At this point, according to Lemma \ref{fN-sequence}, the function $f_N({\bf x})$ can be well-defined. Choosing the right initial data.
	
	We initialize the perturbation as
	\begin{align*}
	\eta_0({\bf x})=\rho_0({\bf x})-g(x_2)=\dfrac{f_N}{\sqrt{N}}\in C_c^\infty(\mr^2),
	\end{align*}
	where $f_N$ is constructed via Proposition \ref{fN-sequence}. Given that $g'(x_2) \not\equiv 0$, we choose $h(\mathbf{x}) = g'(x_2)$ (depending only on $x_2$) in accordance with Proposition \ref{fN-sequence}, thereby obtaining the crucial estimates
	\begin{align}\label{5.3}
		\|\eta_0(\cdot)\|_{W^{1,\infty}}\leq \dfrac{c}{\sqrt{N}},&~\|\eta_0(\cdot)\|_{B^{\frac{2}{p}}_{p,1}}\leq  \dfrac{c\left(\log N\right)^{\frac{1}{2}}}{\sqrt{N}}, ~\|\eta_0(\cdot)\|_{B^{\frac{2}{p}+1}_{p,1}}\leq c\sqrt{N},\\\label{5.4}
		&\|g'\mathcal{R}_1^2\nabla^\perp \eta_0\|_{L^\infty}\geq c_1\sqrt{N},
	\end{align}
where the constants $c,c_1>0$ are independent of $N$.

By applying Proposition \ref{local} together with \eqref{5.3}, we obtain, for sufficiently small $t>0$,
\begin{align*}
	\lef \eta(t)\rig_{B^{\frac{2}{p}+1}_{p,1}}\leq \dfrac{\frac{c_g}{2}+2\left\| \eta_0\right\|_{B^{\frac{2}{p}+1}_{p,1}}}{1-C\left(\frac{c_g}{2}+2\left\| \eta_0\right\|_{B^{\frac{2}{p}+1}_{p,1}}\right)t}\leq c_2\sqrt{N}+c_3,
\end{align*}
and
\begin{equation*}\begin{split}
	\|\eta(t)\|_{B^{\frac{2}{p}}_{p,1}}&\leq \|\eta_0\|_{B^{\frac{2}{p}}_{p,1}}\e^{Ct\left(\displaystyle\sup_{0\leq s\leq t}\|\eta(s)\|_{B^{\frac{2}{p}+1}a_{p,1}}+c_g\right)}\\ &\leq \frac{\left(\log N\right)^{\frac{1}{2}}}{\sqrt{N}}\e^{t(c_4\sqrt{N}+c_5)}.\end{split}
\end{equation*}
The continuous embedding $B^{\frac{2}{p}}_{p,1} \hookrightarrow L^\infty$ and $L^p$- boundedness of the Riesz operator gives, for all $t>0$,
\begin{align*}
	&	\| u(t)\|_{L^\infty},~	\|\nabla u(t)\|_{L^\infty}\leq\|u(t)\|_{B^{\frac{2}{p}+1}_{p,1}}\leq C\|\eta(t)\|_{B^{\frac{2}{p}+1}_{p,1}} \leq c_2\sqrt{N}+c_3.\\
	&	\|\Theta(t)\|_{B^\frac{2}{p}_{p,1}}\leq C\|\eta(t)\|_{B^{\frac{2}{p}+1}_{p,1}} \leq c_2\sqrt{N}+c_3.
\end{align*}
On the other hand,  by  \eqref{5.4},  we have
\begin{align*}
	\left\| -g'\mathcal{R}_{1}^2 \Theta_0\right\|_{L^\infty}\geq c_1\sqrt{N}.
\end{align*}

Now, substituting these estimates into \eqref{4.6}, we obtain the lower bound:
\begin{align}\label{eq:theta-lower-bound}
	\|\Theta(t)\|_{L^\infty} &\geq c_1t\sqrt{N} - \frac{c}{\sqrt{N}} - ct^2\sqrt{N} \nonumber \\
	&\quad - t^2(c_2\sqrt{N}+c_3)\left(1+e^{2t (c_2\sqrt{N}+c_3)}\right)(c_2\sqrt{N}+c_3) \nonumber \\
	&\quad - t\left(1+e^{2t(c_2\sqrt{N}+c_3)}\right) \nonumber \\
	&\quad \times \left(\|\Theta\|_{L^\infty_tL^\infty}(c_2\sqrt{N}+c_3) + \frac{(\log N)^{1/2}}{\sqrt{N}}e^{t(c_4\sqrt{N}+c_5)}\right).
\end{align}
We select the time scale parameter as
$$t=\frac{M}{\sqrt{N}},$$ where the constant $M>0$  will be determined later. Under this choice, we observe that
 the exponential terms $e^{2t (c_2\sqrt{N}+c_3)}$ and $e^{t(c_4\sqrt{N}+c_5)}$ remain uniformly bounded in $N$.
%\begin{align*}
%	1 + e^{2C_0t (c_2\sqrt{N}+c_3)} &\leq C, \\
%	e^{t(c_4\sqrt{N}+c_5)} &\leq C.
%\end{align*}
Consequently, for sufficiently large  $N,$ the lower bound estimate \eqref{eq:theta-lower-bound} reduces to the simplified form
\begin{align}\label{eq:simplified-bound}
	\|\Theta\|_{L^\infty} &\geq c_1t\sqrt{N} - \frac{c}{\sqrt{N}} - ct^2\sqrt{N} - \bar{c}_2t^2N \nonumber \\
	&\quad - \bar{c}_3t\sqrt{N}\|\Theta\|_{L^\infty_tL^\infty} - ct\frac{(\log N)^{1/2}}{\sqrt{N}} \nonumber \\
	&\geq  \bar{c}_1t\sqrt{N} -  \bar{c}_2t^2N -  \bar{c}_3t\sqrt{N}\|\Theta\|_{L^\infty_tL^\infty}.
\end{align}

Without loss of generality, we may assume $\|\Theta\|_{L^\infty_tL^\infty} \leq \dfrac{\bar{c}_1}{2\bar{c}_3}$, since otherwise the desired result follows immediately.  Under this assumption, the simplified bound \eqref{eq:simplified-bound} takes the form:
\begin{align*}
	\|\Theta(t)\|_{L^\infty} \geq \frac{\bar{c}_1}{2}t\sqrt{N} - \bar{c}_2t^2N.
\end{align*}
As a result,  by selecting  $M =t\sqrt{N}= \frac{\bar{c}_1}{4\bar{c}_2},$ we obtain \begin{equation*}\begin{split}
	\sup_{0\leq t\leq \frac{\bar{c}_1}{4\bar{c}_2\sqrt{N}}} \|\Theta(t)\|_{L^\infty} &\geq|\Theta(\frac{\bar{c}_1}{4\bar{c}_2\sqrt{N}})|\\ &\geq \frac{\bar{c}_{1}^2}{16\bar{c}_2} > 0.
\end{split}\end{equation*}
The proof is completed by taking $\varepsilon = \dfrac{1}{\sqrt{N}},$  $\tilde{c} = \dfrac{\bar{c}_1}{4\bar{c}_2}$ and $\tilde{C} = \dfrac{\bar{c}_1}{16\bar{c}_2}$.
\end{proof}

\vspace{2mm}
{\bf Conflict of interest:} 
The authors declare that they have no conflict of interest.
\vspace{2mm}

\section*{Acknowledgments}
Yu was partially supported by the	National Natural Science Foundation of China (NNSFC) (No.11901040), Beijing Natural Science Foundation (BNSF) (No.1204030).

\appendix
\section{Appendix}\label{sec:appendixA}

 In this section, we provide the definitions of the homogeneous and the inhomogeneous Besov spaces. They are defined through the Littlewood-Paley decomposition. Several related embedding relations and inequalities will also be given here.
Most of the materials in this section are classical and
we refer the reader to the book \cite[Chapter 2]{bahouriFourierAnalysisNonlinear2011} for more details.

\subsection{Littlewood-Paley Decomposition}
Consider the annulus $\mathcal{C} := \{\xi \in \mathbb{R}^n : \frac{3}{4} \leq |\xi| \leq \frac{8}{3}\}$. We can construct radial functions  $\chi \in C_c^\infty(B(0,\frac{4}{3}))$ and $\varphi \in C_c^\infty(\mathcal{C})$, valued in $[0,1]$, that  form a partition of unity satisfying
\begin{equation*}
	\sum_{j \in \mathbb{Z}} \varphi(2^{-j}\xi) = 1 \quad \forall \xi \in \mathbb{R}^n \setminus \{0\}, 
\end{equation*}
\begin{equation*}
	\chi(\xi) + \sum_{j \geq 0} \varphi(2^{-j}\xi) = 1 \quad \forall \xi \in \mathbb{R}^n,
\end{equation*}
\begin{equation*}
suup\varphi(2^{-j}\cdot)\cap suup\varphi(2^{-j'}\cdot)=\emptyset,~~|j-j'|\geq2,
\end{equation*}
\begin{equation*}
suup\chi(\cdot)\cap suup\varphi(2^{-j}\cdot)=\emptyset,~~j \geq1.
\end{equation*}

The nonhomogeneous and homogeneous dyadic blocks are defined as follows. For the nonhomogeneous case:
\begin{align*}
	\Delta_j u &\define
	\begin{cases}
		\varphi(2^{-j}D)u = 2^{jn} \int_{\mathbb{R}^n} h(2^j y) u(x-y) dy & \text{if } j \geq 0, \\[1mm]
		\chi(D)u = \int_{\mathbb{R}^n} \tilde{h}(y) u(x-y) dy & \text{if } j = -1, \\[1mm]
		0 & \text{if } j \leq -2,
	\end{cases}
\end{align*}
\begin{equation*} \mathcal{S}_j u =\chi(2^{-j}D)u=2^{jn} \int_{\mathbb{R}^n} \tilde{h}(2^j y) u(x-y) dy,~~j \geq 0,
\end{equation*}
and for the homogeneous case:
\begin{equation*}
\dot{\Delta}_j u \define \varphi(2^{-j}D)u = 2^{jn} \int_{\mathbb{R}^n} h(2^j y) u(x-y) dy \quad \text{for}~ j \in \mathbb{Z},
\end{equation*}
\begin{equation*}\dot{\mathcal{S}}_j u=\chi(2^{-j}D)u,~~j \in \mathbb{Z}.
\end{equation*}
Here $h = \mathscr{F}^{-1}\varphi$ and $\tilde{h} = \mathscr{F}^{-1}\chi$ denote the inverse Fourier transforms of the chosen smooth functions $\varphi$ and $\chi$ respectively.
It is straightforward to derive that the low-frequency cut-off operators $\mathcal{S}_j (\dot{\mathcal{S}}_j)$ are also given by
\begin{align*}
	\mathcal{S}_j u &= \sum_{\ell \leq j-1} \Delta_\ell u,  ~\forall u \in \mathcal{S}'(\mathbb{R}^n), \\
	\dot{\mathcal{S}}_j u &= \sum_{\ell \leq j-1} \dot{\Delta}_\ell u,~ \forall u \in \mathcal{S}_h'(\mathbb{R}^n) := \mathcal{S}'(\mathbb{R}^n)/\wp(\mathbb{R}^n),
\end{align*} where $\wp(\mathbb{R}^n)$ denotes the space of polynomials.
The operators $\Delta_j (\dot{\Delta}_j),$ $\mathcal{S}_j (\dot{\mathcal{S}}_j)$ are $(p,p)$-bounded uniformly in 
$j.$ This fact will be consistently used throughout our analysis.

%By applying the frequency-space partition of unity and the definitions of the dyadic blocks $\Delta_j (\dot{\Delta}_j )$, we obtain the following representations for distributions:
%\begin{align*}
%	u &= \sum_{j \in \mathbb{Z}} \Delta_j u \quad \forall u \in \mathcal{S}'(\mathbb{R}^n), \\
%	u &= \sum_{j \in \mathbb{Z}} \dot{\Delta}_j u \quad \forall u \in \mathcal{S}_h'(\mathbb{R}^n) := \mathcal{S}'(\mathbb{R}^n)/\wp(\mathbb{R}^n).
%\end{align*}

\subsection{Besov Spaces}
\begin{definition}[Homogeneous Besov Space]
	For $s \in \mathbb{R}$ and $1 \leq p, r \leq \infty$, the homogeneous Besov space $\dot{B}^s_{p,r}$ consists of all tempered distributions $u \in \mathcal{S}_h'(\mathbb{R}^n)$ such that
	\begin{equation*}
		\|u\|_{\dot{B}^s_{p,r}} := \left\| \left(2^{js} \|\dot{\Delta}_j u\|_{L^p}\right)_{j \in \mathbb{Z}} \right\|_{\ell^r(\mathbb{Z})} < \infty.
	\end{equation*}
\end{definition}

\begin{definition}[Nonhomogeneous Besov Space]
	For $s \in \mathbb{R}$ and $1 \leq p, r \leq \infty$, the nonhomogeneous Besov space $B^s_{p,r}$ consists of all tempered distributions $u \in \mathcal{S}'(\mathbb{R}^n)$ such that
	\begin{equation*}
		\|u\|_{B^s_{p,r}} := \left\| \left(2^{js} \|\Delta_j u\|_{L^p}\right)_{j \in \mathbb{Z}} \right\|_{\ell^r(\mathbb{Z})} < \infty.
	\end{equation*}
\end{definition}

Many frequently used function spaces are special cases of Besov spaces. The
following lemma lists some useful equivalence and embedding relations.

\begin{lemma}\label{Sobolev}
For all $s\in \mr,$	 $H^s \sim B^s_{2,2},$  $\dot{H}^s\sim \dot{B}^s_{2,2}$.
For all $s\in \mr,$ $W^{s,\infty}\hookrightarrow B^{s}_{\infty,\infty}.$
\end{lemma}

The Besov spaces defined above obey various inclusion relations. In particular, we have the following lemmas.
\begin{lemma}\label{lem:besov_properties}
Assume that $s\in\mr$ and $p, r\in[1,\infty].$

(1) If $s > 0$, then
\begin{equation*}
	B^{s}_{p,r} \hookrightarrow \dot{B}^{s}_{p,r},~~ p < \infty, \end{equation*}
\begin{equation*}	B^{s}_{\infty,r} \cap \mathcal{S}'_h \hookrightarrow \dot{B}^{s}_{\infty,r}.
\end{equation*}

(2) If $s_1\leq s_2,$ then $B^{s_2}_{p,r} \hookrightarrow B^{s_1}_{p,r}.$
This inclusion relation is false for the homogeneous Besov spaces.

(3)	\textbf{Besov embedding properties}: If $1\leq p\leq \widetilde{p}\leq\infty,$  $1\leq r\leq \widetilde{r}\leq\infty$, then, for any $s\in\mathbb{R}$,
		\begin{equation*}\begin{split}
			B^{s}_{p,r} &\hookrightarrow B^{s-n(1/p-1/\widetilde{p})}_{\widetilde{p},\widetilde{r}}, \\
			B^{n/p}_{p,1} &\hookrightarrow L^\infty. 
		%	L^p &\hookrightarrow B^0_{p,\infty}.
	\end{split}	\end{equation*}
		
(4) \textbf{Interpolation inequalities}: Assume that $s_1 < s_2$, $1 \leq p \leq \infty$, $1 \leq r \leq \infty$ and $0 < \theta < 1$, then
		\begin{align*}
			\|u\|_{B^{\theta s_1+(1-\theta)s_2}_{p,r}} &\leq \|u\|^\theta_{B^{s_1}_{p,r}} \|u\|^{1-\theta}_{\dot{B}^{s_2}_{p,r}}, \\
			\|u\|_{B^{\theta s_1+(1-\theta)s_2}_{p,1}} &\leq \frac{C}{s_2-s_1}\left(\frac{1}{\theta}+\frac{1}{1-\theta}\right) \|u\|^\theta_{B^{s_1}_{p,\infty}} \|u\|^{1-\theta}_{B^{s_2}_{p,\infty}}.
		\end{align*}
		
		% \textbf{Product estimate}: For $1 \leq p,r \leq \infty$ and $s > 0$, the space $\dot{B}^s_{p,r} \cap L^\infty$ forms an algebra satisfying:
	%	\begin{align*}
	%		\|uv\|_{\dot{B}^s_{p,r}} \leq C\left(\|v\|_{\dot{B}^s_{p,r}}\|u\|_{L^\infty} + \|u\|_{\dot{B}^s_{p,r}}\|v\|_{L^\infty}\right).
	%	\end{align*}

\end{lemma}

\begin{lemma}\label{lem:besov_compact}
	Let $K \subset \mathbb{R}^n$ be compact. For $s > 0$, the spaces $B^s_{p,r}(K)$ and $\dot{B}^s_{p,r}(K)$ coincide. Moreover, there exists $C > 0$ such that for any $f \in \dot{B}^s_{p,r}(K)$,
	\begin{align*}
		\|f\|_{B^s_{p,r}(K)} \leq C(1 + |K|)^{s/n} \|f\|_{\dot{B}^s_{p,r}(K)}.
	\end{align*}
\end{lemma}

%The following lemma  provides Bernstein's inequalities, which
%are useful tools for dealing with Fourier localized functions. 

%\begin{lemma}[{\cite[Chapter 2]{bahouriFourierAnalysisNonlinear2011}}]\label{BBBB}
%	Let $\mathcal{B}$ be a ball of $\mr^{n}$, and $\mathcal{C}$ be a
%	ring of  $\mr^{n}$. There exists a positive constant $C$ such that for all integer $k\geq0$, all
%	$1\leq a \leq b \leq\infty$ and  $u\in L^{a}(\mr^{n})$, the following estimates are satisfied:
%	\begin{equation*}
%		\sup_{|\alpha|=k}\|\partial^{\alpha}u\|_{L^{b}(\mr^{n})}\leq C^{k+1}\lambda^{k+n(\frac{1}{a}-\frac{1}{b})}\|u\|_{L^{a}(\mr^{n})},supp\hat{u}\subset\lambda \mathcal{B},\\
%	\end{equation*}
%	\begin{equation*}
%		C^{-(k+1)}\lambda^{k}\|u\|_{L^{a}(\mr^{n})}\leq\sup_{|\alpha|=k}\|\p%artial^{\alpha}u\|_{L^{a}(\mr^{n})}\leq C^{k+1}\lambda^{k}\|u\|_{L^{a}(\mr^{n})},supp\hat{u}\subset\lambda \mathcal{C}.
%	\end{equation*}
	
%\end{lemma}

\subsection{Nonhomogeneous Bony Decomposition}
In this subsection, we recall nonhomogeneous Bony decomposition:
\begin{equation*}
	uv = T_u v + T_v u + R(u,v),
\end{equation*}
with nonhomogeneous paraproduct of $v$ by $u$
\begin{equation*}
	T_u v := \sum_{j \geq -1} S_{j-1} u \Delta_j v, 
\end{equation*} and nonhomogeneous remainder of $u$ and $v$
\begin{equation*}
	R(u,v) :=  \sum_{|j-k| \leq 1} \Delta_{j} u \Delta_k v.
\end{equation*}

\begin{lemma}\label{B}
There exist constants $C>0$, such that for any $s,s_1,s_2\in\mr$, and $1\leq p, p_1, p_2, r, r_1, r_2\leq \infty,$
 \begin{equation*}
	\|T_u v\|_{B^s_{p,r}}\leq C^{|s|+1}\|u\|_{L^\infty}\|v\|_{B^s_{p,r}},
\end{equation*} 
\begin{equation*}
\|R_u v\|_{B^{\widetilde{s}}_{\widetilde{p},r}}\leq \frac{C^{|s_1+s_2|+1}}{s_1+s_2}\|u\|_{B^{s_1}_{p_1,r_1}}\|v\|_{B^{s_2}_{p_2,r_2}},
\end{equation*} where $s_1+s_2>0,$ $\frac{1}{\widetilde{p}}\leq\frac{1}{p_1}+\frac{1}{p_2}\leq 1,$ $\frac{1}{r_1}+\frac{1}{r_2}=\frac{1}{r}\leq1,$ $\widetilde{s}-\frac{n}{p}=s_1-\frac{n}{p_1}+s_2-\frac{n}{p_2}.$
\end{lemma}

%The first lemma is the so-called Bernstein inequalities.
%\begin{lemma}[\cite{bahouriFourierAnalysisNonlinear2011}]\label{B^{s}
%Let $\mathcal{B}$ be a ball of $\mr^{n}$, and $\mathcal{C}$ be a
%ring of  $\mr^{n}$. There exists a positive constant $C$ such that for all integer $k\geq0$, all
%$1\leq a \leq b \leq\infty$ and  $u\in L^{a}(\mr^{n})$, the following estimates are satisfied:
%\begin{equation*}
%	\sup_{|\alpha|=k}\|\partial^{\alpha}u\|_{L^{b}}\leq C^{k+1}\lambda^{k+n(\frac{1}{a}-\frac{1}{b})}\|u\|_{L^{a}},~~supp\hat{u}\subset\lambda \mathcal{B},\\
%\end{equation*}
%\begin{equation*}
%C^{-(k+1)}\lambda^{k}\|u\|_{L^{a}}\leq\sup_{|\alpha|=k}\|\partial^{\alpha}u\|_{L^{a}}\leq C^{k+1}\lambda^{k}\|u\|_{L^{a}},~~supp\hat{u}\subset\lambda \mathcal{C}.
%\end{equation*}	

%\end{lemma}

\subsection{Commutator  Estimates}
The following estimates are crucial for handling nonlinear terms in our analysis of system \eqref{p-IPM}.

\begin{lemma}\label{lem:commutator}
	Let $\sigma \in \mathbb{R}$, $1 \leq r \leq \infty$, $1 \leq p \leq p_1 \leq \infty$, and let $v$ be a vector field on $\mathbb{R}^n$. Assume either:
	\begin{align*}
		\sigma > -n \min\left\{\frac{1}{p_1}, \frac{1}{p'}\right\},
	\end{align*}
	or, in the divergence-free case ($\nabla \cdot v = 0$):
	\begin{align*}
		\sigma > -1 - n \min\left\{\frac{1}{p_1}, \frac{1}{p'}\right\},
	\end{align*}
	where $\frac{1}{p} + \frac{1}{p'} = 1$. Define the commutator $R_j := [v \cdot \nabla, \Delta_j]f$. Then there exists $C = C(p,p_1,\sigma,n) > 0$ such that:
	\begin{equation*}
		\left\|(2^{j\sigma}\|R_j\|_{L^p})_j\right\|_{\ell^r} \leq 
		\begin{cases}
			C\|\nabla v\|_{B^{\frac{n}{p}}_{p_1,\infty} \cap L^\infty} \|f\|_{B^\sigma_{p,r}}, & \text{if } \sigma < 1 + \frac{n}{p_1}, \\
			C\|\nabla v\|_{B^{\sigma-1}_{p_1,r}} \|f\|_{B^\sigma_{p,r}}, & \text{if } \sigma > 1 + \frac{n}{p_1} \text{ or } \sigma = 1 + \frac{n}{p_1}, r=1,
		\end{cases}
	\end{equation*}
	where $[v \cdot \nabla, \Delta_j]f = v \cdot \nabla(\Delta_j f) - \Delta_j(v \cdot \nabla f)$.
\end{lemma}


\begin{thebibliography}{00}

 
\bibitem{bahouriFourierAnalysisNonlinear2011}
 H. Bahouri, J. Chemin,  R. Danchin, {\em Fourier analysis and
 nonlinear partial differential equations}, vol. 343 of Grundlehren Der
 Mathematischen Wissenschaften, Springer Berlin Heidelberg, 2011.
 
  \bibitem{paicu-2024-arma-IPM}
 R. Bianchini, T. Crin-Barat, M. Paicu, {\em  Relaxation approximation and asymptotic stability of stratified solutions to the IPM equation},  Arch. Ration. Mech. Anal. 248(1), 2024.
 
   \bibitem{cordoba-2024-arxiv-ipm-ill}
 R. Bianchini, D. C\'{o}rdoba, L. Mart\'{i}nez-Zoroa, {\em  Non existence and strong ill-posedness in $H^2$ for the stable IPM equation},  arXiv: 2410.01297, 2024.
 

 \bibitem{euler-2-illpose-bourgain-2015-IA}
J. Bourgain, D. Li, {\em  Strong ill-posedness of the incompressible Euler equation in borderline Sobolev spaces},  Invent. Math. 201(1), 97-157, 2015. 


\bibitem{euler-3-illpose-bourgain-2015-GFA}
J. Bourgain, D. Li, {\em  Strong ill-posedness of the incompressible Euler equation in integer $C^m$ spaces}, Geom. Funct. Anal. 25(1), 1-86, 2015.

\bibitem{euler-4-illpose-bourgain-2021-IMRN}
J. Bourgain, D. Li, {\em  Strong ill-posedness of the 3D incompressible Euler equation in borderline spaces},  Int. Math. Res. Not.  (16), 12155-12264, 2021.

 
  \bibitem{castro-2019-local-arma}
 A. Castro, D. C\'{o}rdoba, D. Lear, {\em  Global existence of quasi-stratified solutions for the confined IPM equation},  Arch. Ration. Mech. Anal.  232(1), 437-471, 2019.
 

 
 \bibitem{sqg-1-illpose-cjk-2024-arixv}
 Y. P. Choi, J. Jung, J. Kim, {\em  On well/ill-posedness for the generalized surface quasi-geostrophic equations in H\" older spaces}, arXiv preprint, arXiv:2405.01245, 2024.


 \bibitem{wu-2015-advance-analyticity}
 P. Constantin, V. Vicol, J. Wu, {\em  Analyticity of Lagrangian trajectories for well posed inviscid incompressible fluid models}, Adv. Math. 285, 352-393, 2015.
 
  \bibitem{non-uniqueness-cordoba-2011}
 D. C\'{o}rdoba, D. Faraco, F. Gancedo, {\em  Lack of uniqueness for weak solutions of the incompressible porous media equation.}, Arch. Ration. Mech. Anal. 200(3), 725-746, 2011.
 
 \bibitem{cordoba-2007-ipm-local}
D. C\'{o}rdoba, F. Gancedo, R. Orive, {\em  Analytical behavior of two-dimensional incompressible flow in porous media},  J. Math. Phys. 48(6),  2007, 19pp.

 
\bibitem{sqg-2-illpose-cordoba-2022-advance}
D. C\'{o}rdoba, L. Mart\'{i}nez-Zoroa, {\em  Non existence and strong ill-posedness in $C^k$ and Sobolev spaces for SQG}, Adv. Math. 407, 2022, 74pp.

\bibitem{sqg-3-illpose-cordoba-2024-cmp}
D. C\'{o}rdoba, L. Martínez-Zoroa, {\em  Non-existence and strong ill-posedness in $C^{k,\beta}$ for the generalized surface quasi-geostrophic equation}, Comm. Math. Phys.  405(7),  2024, 53pp.



\bibitem{elgindi-ipm-2017-arma}
T. Elgindi, {\em  On the asymptotic stability of stationary solutions of the inviscid incompressible porous medium equation},  Arch. Ration. Mech. Anal. 225(2), 573-599, 2017.



\bibitem{euler-1-illpose-elgindi-2017}
T. Elgindi,  I.-J. Jeong, {\em  Ill-posedness for the incompressible Euler equations in critical Sobolev spaces},  Ann. PDE,  3(1),  2017, 19pp.


 \bibitem{elgindi2020infty}
 T. Elgindi, N. Masmoudi, {\em  $L^\infty$ ill-posedness for a class of equations arising in hydrodynamics},  Arch. Ration. Mech. Anal. 235(3), 1979-2025, 2020.

 \bibitem{GTM-fourier-grafakos}
L. Grafakos, {\em  Modern Fourier analysis},  Springer, Berlin 2009.

\bibitem{ipm-phy}
D. B. Ingham, I. Pop, {\em  Transport phenomena in porous media},  Elsevier, 1998.



\bibitem{sqg-4-illpose-kim-jeong-2024-analpde}
I.-J. Jeong, J. Kim, {\em  Strong ill-posedness for SQG in critical Sobolev spaces}, Anal. PDE, 17, 133-170, 2024.




 \bibitem{kim-IPM-2024-JFA}
M. J. Jo, J. Kim, {\em  Quantitative asymptotic stability of the quasi-linearly stratified densities in the IPM equation with the sharp decay rates},  J. Funct. Anal. 286(11), 2024, 66pp.




 \bibitem{yao-2023-arma-smallscale-ipm}
A. Kiselev, Y. Yao, {\em   Small scale formations in the incompressible porous media equation},  Arch. Ration. Mech. Anal. 247(1), 2023.


 \bibitem{euler-10-illpose-lxyt-2024-arxiv}
X. Luo, {\em  Ill-posedness of incompressible fluids in supercritical Sobolev spaces}, arXiv preprint, arXiv:2404.07813, 2024.

\bibitem{majda-incompressible flow}
A. J. Majda and A. L. Bertozzi, {\em Vorticity and incompressible flow}, Cambridge: Cambridge University Press, 2002.

 \bibitem{miao2019-littlewood}
C. Miao, J. Wu, Z. Zhang, {\em   Littlewood-Paley Theory and Applications to Fluid Dynamics Equations (in Chinese)}, Science Press, Beijing, China, 2019.




 \bibitem{euler-11-illpose-my-2016-ma}
G. Misiolek, T. Yoneda, {\em Local ill-posedness of the incompressible Euler equations in $C^1$ and $B^1_{\infty,1}$}, Math. Ann. 364(1-2), 243-268, 2016.

\bibitem{gag-nirenberg}
L. Nirenberg, {\em On elliptic partial differential equations}, Ann. Scuola Norm. Sup. Pisa Cl. Sci. 13(3), 115-162, 1959.

\bibitem{park-arxiv-2024-IPM-strip}
J. Park, {\em Stability analysis of the incompressible porous media equation and the Stokes transport system via energy structure}, arXiv preprint, arXiv:2403.14187, 2024.


\bibitem{non-uniqueness-2011-shvydkoy}
R. Shvydkoy, {\em Convex integration for a class of active scalar equations}, J. Am. Math. Soc. 24(4), 1159-1174, 2011.


\bibitem{non-uniqueness-2012-l-szekelyhidi}
L. Jr Székelyhidi, {\em Relaxation of the incompressible porous media equation}, Ann. Sci. de l’Ecole Norm. Supérieure. 45(3), 491-509, 2012.




\bibitem{xue-2009-besov-}
L. Xue, {\em   On the well-posedness of incompressible flow in porous media with supercritical diffusion}, Appl. Anal. 88(4), 547-561, 2009. 




		\end{thebibliography}
	\end{document}